\documentclass[journal,twoside,web]{ieeecolor}

\usepackage{etoolbox}
\makeatletter
\@ifundefined{color@begingroup}%
{\let\color@begingroup\relax
	\let\color@endgroup\relax}{}%
\def\fix@ieeecolor@hbox#1{%
	\hbox{\color@begingroup#1\color@endgroup}}
\patchcmd\@makecaption{\hbox}{\fix@ieeecolor@hbox}{}{\FAILED}
\patchcmd\@makecaption{\hbox}{\fix@ieeecolor@hbox}{}{\FAILED}

\usepackage{generic}
\usepackage{cite}

\pdfminorversion=4

\usepackage{amsmath,amssymb,amsfonts}
\usepackage{algorithm}
\usepackage{algpseudocode}
\usepackage{graphicx}
\usepackage{textcomp}
\usepackage{url}
\usepackage{siunitx} 
\usepackage[hidelinks]{hyperref} 
\usepackage{aligned-overset} 
\usepackage{setspace} 
\usepackage{booktabs} 

\usepackage{pifont}
\newcommand{\cmark}{\ding{51}}%
\newcommand{\xmark}{\ding{55}}%

\usepackage{enumitem}

\usepackage{amsthm}

\setlength{\textfloatsep}{8pt}

\allowdisplaybreaks

\DeclareFontFamily{U}{mathx}{\hyphenchar\font45}
\DeclareFontShape{U}{mathx}{m}{n}{
      <5> <6> <7> <8> <9> <10>
      <10.95> <12> <14.4> <17.28> <20.74> <24.88>
      mathx10
      }{}
\DeclareSymbolFont{mathx}{U}{mathx}{m}{n}
\DeclareFontSubstitution{U}{mathx}{m}{n}
\DeclareMathAccent{\widecheck}{0}{mathx}{"71}

\usepackage{tikz,pgfplots}
\pgfplotsset{compat=1.5}
\usetikzlibrary{calc,arrows,automata}
\usetikzlibrary{shapes,positioning}
\usetikzlibrary{patterns,decorations.pathreplacing} 
\usepgfplotslibrary{groupplots}
\usetikzlibrary{pgfplots.groupplots}
\usetikzlibrary{matrix}
\tikzset{
	preEdge/.style={black, <-, shorten <=1pt, >=stealth', semithick, rounded corners=2pt},
	postEdge/.style={black, ->, shorten >=1pt, >=stealth', semithick, rounded corners=2pt},
	axes/.style={black, ->, >=stealth'}
} 

\definecolor{good_blue}{RGB}{0, 92, 171}
\definecolor{good_red}{RGB}{227, 27, 35}
\definecolor{good_gray}{RGB}{230, 241, 238}
\definecolor{good_yellow}{RGB}{255, 195, 37}
\definecolor{good_green}{RGB}{0, 192, 0}

\newcommand\copyrighttext{%
	\footnotesize \copyright 2023 IEEE. Personal use of this material is permitted. Permission from IEEE must be obtained for all other uses, in any current or future media, including reprinting/republishing this material for advertising or promotional purposes, creating new collective works, for resale or redistribution to servers or lists, or reuse of any copyrighted component of this work in other works.}
\newcommand\copyrightnotice{%
	\begin{tikzpicture}[remember picture,overlay]
		\node[anchor=south,yshift=6pt] at (current page.south) {\fbox{\parbox{\dimexpr\textwidth-\fboxsep-\fboxrule\relax}{\copyrighttext}}};
	\end{tikzpicture}%
}


\newcommand{\norm}[1]{\big\lVert#1\big\rVert}
\newcommand{\smallnorm}[1]{\lVert#1\rVert}
\DeclareMathOperator*{\argmax}{arg\,max}
\DeclareMathOperator*{\argmin}{arg\,min}
\DeclareMathOperator*{\diag}{diag}
\DeclareMathOperator*{\rad}{rad}

\newcommand{\bigO}[1]{\mathcal{O}\!\left(#1\right)}

\DeclareMathOperator*{\auxlinComb}{comb}
\DeclareMathOperator*{\auxred}{reduce}
\DeclareMathOperator*{\auxbox}{box}
\DeclareMathOperator*{\auxerr}{err}
\newcommand{\linCombOp}[1]{\auxlinComb\hspace{-1.5pt}\big(#1\big)}	
\newcommand{\redOp}[1]{\auxred\hspace{-1.5pt}\big(#1\big)}			
\newcommand{\boxOp}[1]{\auxbox\!\left(#1\right)}					
\newcommand{\errOp}[1]{\auxerr\!\left(#1\right)}					
\newcommand{\dH}[1]{d_H\big(#1\big)}

\newcommand{\R}[1]{\mathbb{R}^{#1}}
\newcommand{\N}[1]{\mathbb{N}}
\newcommand{\intmat}[1]{\boldsymbol{\mathcal{#1}}}
\newcommand{\vecones}{\mathbf{1}}
\newcommand{\matzeros}{\mathbf{0}}

\newcommand{\B}[1]{\mathcal{B}_{#1}}				
\renewcommand{\S}[1]{\mathcal{S}_{#1}}				
\newcommand{\Sover}{\widehat{\mathcal{S}}}			
\renewcommand{\P}[1]{\mathcal{P}_{#1}}				
\newcommand{\CZ}[1]{\mathcal{CZ}_{#1}}				
\newcommand{\Z}[1]{\mathcal{Z}_{#1}}				
\newcommand{\zono}[1]{\langle #1 \rangle_{Z}}		
\newcommand{\poly}[1]{\langle #1 \rangle_{H}}		
\newcommand{\polyV}[1]{\langle #1 \rangle_{V}}		
\newcommand{\conZono}[1]{\langle #1 \rangle_{CZ}}	

\newcommand{\gens}{\gamma}								
\newcommand{\cons}{h}									
\newcommand{\polyCons}{a}								
\newcommand{\polyVert}{s}								
\newcommand{\tayl}[1]{\eta_{#1}}						
\newcommand{\zonorder}[1]{\rho_{#1}}					
\newcommand{\overzonorder}{\rho_d}						
\newcommand{\zonorderU}{\rho_\mathcal{U}}				
\newcommand{\tFinal}{t_{\text{end}}}					
\newcommand{\laststep}{k'}								
\newcommand{\laststepzero}{k_0'}						
\newcommand{\nrSafeSets}{r}								
\newcommand{\nrUnsafeSets}{w}							
\newcommand{\distance}{\nu}								
\newcommand{\Gkeep}{G_{\text{keep}}}					
\newcommand{\Gred}{G_{\text{red}}}						

\newcommand{\initset}{\mathcal{X}^0}		
\newcommand{\inputset}{\mathcal{U}}			
\newcommand{\inputsetzero}{\mathcal{U}_0}	
\newcommand{\uTrans}{\tilde{u}}				
\newcommand{\C}{\mathcal{C}}				

\newcommand{\safeSet}[1]{\mathcal{G}_{#1}}				
\newcommand{\unsafeSet}[1]{\mathcal{F}_{#1}}			


\newcommand{\Rtp}[1]{\mathcal{R}(#1)}
\newcommand{\overRtp}[1]{\widehat{\mathcal{R}}(#1)}
\newcommand{\underRtp}[1]{\widecheck{\mathcal{R}}(#1)}
\newcommand{\Rti}[1]{\mathcal{R}(#1)}
\newcommand{\overRti}[1]{\widehat{\mathcal{R}}(#1)}
\newcommand{\underRti}[1]{\widecheck{\mathcal{R}}(#1)}
\newcommand{\Ytp}[1]{\mathcal{Y}(#1)}
\newcommand{\overYtp}[1]{\widehat{\mathcal{Y}}(#1)}


\newcommand{\HPtp}[1]{\mathcal{H}(#1)}
\newcommand{\HPti}[1]{\mathcal{H}(#1)}
\newcommand{\overHPtp}[1]{\widehat{\mathcal{H}}(#1)}
\newcommand{\overHPti}[1]{\widehat{\mathcal{H}}(#1)}
\newcommand{\innerHti}[1]{\widecheck{\mathcal{H}}(#1)}

\newcommand{\Pu}[1]{\mathcal{P}^u(#1)}

\newcommand{\PU}[1]{\mathcal{P}^\inputset{}(#1)}
\newcommand{\overPU}[1]{\widehat{\mathcal{P}}^\inputset{}(#1)}
\newcommand{\overPUzero}[1]{\widehat{\mathcal{P}}_0^\inputset{}(#1)}
\newcommand{\overPUrest}[1]{\widehat{\mathcal{P}}_{\infty}^\inputset{}(#1)}
\newcommand{\underPU}[1]{\widecheck{\mathcal{P}}^\inputset{}(#1)}

\newcommand{\E}[1]{\intmat{E}(\Delta t_{#1},\tayl{#1})}
\newcommand{\Ezero}[1]{\intmat{E}(0,\tayl{#1})}
\newcommand{\Fx}[1]{\intmat{F}(\Delta t_{#1},\tayl{#1})}
\newcommand{\Fu}[1]{\intmat{G}(\Delta t_{#1},\tayl{#1})}
\newcommand{\Fxzero}[1]{\intmat{F}(0,\tayl{#1})}
\newcommand{\Fuzero}[1]{\intmat{G}(0,\tayl{#1})}
\newcommand{\T}[1]{\intmat{T}^{(#1)}}

\newcommand{\emax}{\varepsilon_\text{max}}
\newcommand{\enonaccadm}[1]{\overline{\varepsilon}^{\text{n},\tau}_{#1}}
\newcommand{\eaccadm}[1]{\overline{\varepsilon}^{\text{a}}(#1)}
\newcommand{\eaccadmstep}[1]{\overline{\varepsilon}^{\text{a},\tau}_{#1}}
\newcommand{\eredadm}[1]{\overline{\varepsilon}^{\text{r}}(#1)}
\newcommand{\eredadmstep}[1]{\overline{\varepsilon}^{\text{r},\tau}_{#1}}
\newcommand{\etotalx}[1]{\varepsilon^{x}_{#1}}
\newcommand{\etotaly}[1]{\varepsilon^{y}_{#1}}
\newcommand{\enonacc}[1]{\Delta \varepsilon^{\text{n}}_{#1}(\Delta t_{#1},\tayl{#1})}
\newcommand{\eacc}[1]{\varepsilon^{\text{a}}_{#1}}
\newcommand{\eaccstep}[1]{\Delta \varepsilon^{\text{a}}_{#1}(\Delta t_{#1},\tayl{#1})}
\newcommand{\eaccU}[1]{\varepsilon^{\mathcal{U}}_{#1}}
\newcommand{\eaccUstep}[1]{\Delta \varepsilon^{\mathcal{U}}_{#1}(\Delta t_{#1},\tayl{#1})}
\newcommand{\eUstep}[1]{\Delta \varepsilon^{\mathcal{U},\tau}_{#1}(\Delta t_{#1},\tayl{#1})}

\newcommand{\ered}[1]{\varepsilon^{\text{r}}_{#1}}
\newcommand{\eredstep}[1]{\Delta \varepsilon^{\text{r}}_{#1}(\zonorder{k})}
\newcommand{\eredapx}[1]{\tilde{\varepsilon}^{\text{r}}_{#1}}

\newcommand{\ehom}[1]{\Delta \varepsilon^{\text{h}}_{#1}(\Delta t_{#1},\tayl{#1})}

\newtheoremstyle{style}
  {\topsep}
  {\topsep}
  {\itshape}
  {0pt}
  {\bfseries}
  {:}
  { }
  {\thmname{#1}\thmnumber{ #2}\thmnote{ (#3)}}

\theoremstyle{style}
\newtheorem{definition}{Definition}
\newtheorem{proposition}{Proposition}
\newtheorem{theorem}{Theorem}
\newtheorem{lemma}{Lemma}

\renewcommand{\algref}[1]{Alg.~\ref{#1}}
\newcommand{\figref}[1]{Fig.~\ref{#1}}
\newcommand{\tabref}[1]{Tab.~\ref{#1}}
\newcommand{\secref}[1]{Sec.~\ref{#1}}

\newcommand{\lineref}[1]{Line~\ref{#1}}
\newcommand{\linesref}[2]{Lines~\ref{#1}-\ref{#2}}

\newcommand{\defref}[1]{Def.~\ref{#1}}
\newcommand{\propref}[1]{Prop.~\ref{#1}}
\newcommand{\thmref}[1]{Theorem~\ref{#1}}
\newcommand{\lmmref}[1]{Lemma~\ref{#1}}
\newcommand{\lmmsref}[2]{Lemmas~\ref{#1}-\ref{#2}}

\makeatletter
\let\OldStatex\Statex
\renewcommand{\Statex}[1][3]{%
  \setlength\@tempdima{\algorithmicindent}%
  \OldStatex\hskip\dimexpr#1\@tempdima\relax}
\makeatother

\begin{document}

\title{Fully Automated Verification of Linear Systems Using Inner- and Outer-Approximations of Reachable Sets}


\author{Mark Wetzlinger, Niklas Kochdumper, Stanley Bak, and Matthias Althoff
\thanks{
This paragraph of the first footnote will contain the date on which you submitted your paper for review.
This work was supported by the European Research Council (ERC) project justITSELF under grant agreement No 817629, by the German Research Foundation (DFG) project ConVeY under grant number GRK 2428, and by the Air Force Office of Scientific Research and the Office of Naval Research under award number FA9550-19-1-0288, FA9550-21-1-0121, FA9550-22-1-0450 and N00014-22-1-2156.}
\thanks{Mark Wetzlinger and Matthias Althoff are with the Department of Computer Science, Technical University of Munich, 85748 Garching, Germany (e-mail: \{m.wetzlinger, althoff\}@tum.de).
Niklas Kochdumper and Stanley Bak are with the Department of Computer Science, Stony Brook University, Stony Brook, NY 11794, USA (e-mail: \{niklas.kochdumper, stanley.bak\}@stonybrook.edu.}
}

\maketitle


\begin{abstract}
Reachability analysis is a formal method to guarantee safety of dynamical systems under the influence of uncertainties.
A substantial bottleneck of all reachability algorithms is the necessity to adequately tune specific algorithm parameters, such as the time step size, which requires expert knowledge.
In this work, we solve this issue with a fully automated reachability algorithm that tunes all algorithm parameters internally such that the reachable set enclosure respects a user-defined approximation error bound in terms of the Hausdorff distance to the exact reachable set.
Moreover, this bound can be used to extract an inner-approximation of the reachable set from the outer-approximation using the Minkowski difference.
Finally, we propose a novel verification algorithm that automatically refines the accuracy of the outer-approximation and inner-approximation until specifications given by time-varying safe and unsafe sets can be verified or falsified.
The numerical evaluation demonstrates that our verification algorithm successfully verifies or falsifies benchmarks from different domains without requiring manual tuning.
\end{abstract}

\begin{IEEEkeywords}
Formal verification, reachability analysis, linear systems, set-based computing.
\end{IEEEkeywords}
\copyrightnotice



\section{Introduction}
\label{sec:introduction}

\IEEEPARstart{D}{eploying} cyber-physical systems in safety-critical environments requires formal verification techniques to ensure correctness with respect to the desired functionality, as failures can lead to severe economic or ecological consequences and loss of human life.
One of the main techniques to provide safety guarantees is reachability analysis, which predicts all possible future system behaviors under uncertainty in the initial state and input.
Reachability analysis has already been successfully applied in a wide variety of applications, such as analog/mixed-signal circuits \cite{Frehse2006}, power systems \cite{Pico2014}, robotics \cite{Lengagne2011}, system biology \cite{Kaynama2012}, aerospace applications \cite{Hobbs2018}, and autonomous driving \cite{Vaskov2019}.
The most common verification tasks for these applications are reach-avoid problems, where one aims to prove that the system reaches a goal set while avoiding unsafe sets.
A substantial bottleneck of all verification algorithms is the manual tuning of certain algorithm parameters, which requires expert knowledge.
To overcome this limitation, we present the first fully automated verification algorithm for linear time-invariant systems.

\subsection{State of the Art}
\label{ssec:stateoftheart}

The exact reachable set can only be computed in rare special cases \cite{Gan2018}.
Therefore, one usually computes tight outer-approximations or inner-approximations of the reachable set instead, which can be used to either prove or disprove safety, respectively.
While there exist many different approaches, e.g., stochastic techniques \cite{Vinod2017} or data-driven/learning methods \cite{Devonport2021-CDC,Thorpe2021}, the following review focuses on model-based reachability analysis of linear systems.

Most work is concerned with computing outer-approximations, where the most prominent approaches for linear systems are based on set propagation \cite{Girard2005,Althoff2010a,Frehse2011}.
These methods evaluate the analytical solution for linear systems in a set-based manner and iteratively propagate the resulting reachable sets forward in time.
One can propagate the sets from the previous step or the initial set.
The latter method avoids the so-called wrapping effect \cite{LeGuernic2010}, i.e., the amplification of outer-approximation errors over subsequent steps, but deals less efficiently with time-varying inputs.
An alternative to set propagation is to compute the reachable set using widened trajectories from simulation runs \cite{Donze2007,Dang2008}.
Moreover, special techniques have been developed recently to facilitate the analysis of high-dimensional systems.
One strategy is to decompose the system into several decoupled/weakly-coupled blocks to reduce the computational effort \cite{Kaynama2011-IJC,Bogomolov2018}, while another group computes the reachable set in a lower-dimensional Krylov subspace that captures the dominant dynamical behavior \cite{Althoff2019,Bak2019}.
Inner-approximation algorithms have been researched for systems with piecewise-constant inputs based on set propagation \cite{Girard2006}, piecewise-affine systems based on linear matrix inequalities \cite{Hamadeh2008}, and time-varying linear systems based on ellipsoidal inner-approximations to parametric integrals \cite{Kurzhanskiy2000}.
Other approaches compute inner-approximations by extraction from outer-approximations \cite{Kochdumper2020d} or for projected dimensions \cite{Goubault2019}---despite being designed for nonlinear dynamics, these works can still compete with the aforementioned specialized approaches for linear systems due to their recency.

Concerning the set representations for reachability analysis, early approaches used polyhedra or template polyhedra \cite{Asarin2000}, which are limited to low-dimensional systems due to the exponential increase in the representation size;
ellipsoids \cite{Kurzhanskiy2000} were also used but result in a conservative approximation as they are not closed under Minkowski sum.
More recent approaches use support functions \cite{LeGuernic2010}, zonotopes \cite{Girard2005}, or their combination \cite{Althoff2016c}, for which all relevant set operations can be computed accurately and efficiently.
A related representation is star sets \cite{Duggirala2016}, where constraints are imposed on a linear combination of base vectors.

Some approaches explicitly address the requirement of tightening the computed outer-approximation for successful verification.
Many of them are based on counterexample-guided abstraction refinement (CEGAR), where either the model \cite{Stursberg2004,Frehse2006} or the set representation \cite{Bogomolov2016,Bogomolov2017} is refined.
A more recent approach \cite{Schupp2018} utilizes the relation between all algorithm parameters and the tightness of the reachable sets, proposing an individual parameter refinement in fixed discrete steps to yield tighter results.
Another method \cite{Johnson2016} refines the tightness of the reachable set enclosure as much as the real-time constraints allow for re-computation in order to choose between a verified but conservative controller and an unverified counterpart with better performance.

Common reachability tools for linear systems are
\textit{CORA} \cite{Althoff2015a},
\textit{Flow*} \cite{Chen2013},
\textit{HyDRA} \cite{Schupp2017},
\textit{HyLAA} \cite{Bak2017},
\textit{JuliaReach} \cite{Bogomolov2019a},
\textit{SpaceEx} \cite{Frehse2011},
and \textit{XSpeed} \cite{Ray2015b}.
These tools still require manual tuning of algorithm parameters to obtain tight approximations.
Since this requires expert knowledge about the underlying algorithms, the usage of reachability analysis is currently mainly limited to academia.
Another issue is that the unknown distance between the computed outer-approximation and the exact reachable set, which may result in so-called spurious counterexamples.
Recently, first steps towards automated parameter tuning have been taken:
A rather brute-force method \cite{Bak2016} proposes to recompute the reachable set from scratch, where the parameter values are refined using fixed scaling factors after each run.
However, recomputation is a computationally demanding procedure and does not exploit information about the specific system dynamics.
Another work \cite{Prabhakar2011} tunes the time step size by approximating the flow below a user-defined error bound but is limited to affine systems.
The most sophisticated approach \cite{Frehse2011,Frehse2013} tunes the time step size by iterative refinement to eventually satisfy a user-defined error bound between the exact reachable set and the computed outer-approximation.
Since this error bound is limited to manually selected directions, the obtained information may differ significantly depending on their choice.
In conclusion, there does not yet exist a fully automated parameter tuning algorithm for linear systems that satisfies an error bound in terms of the Haussdorf distance to the exact reachable set.

\subsection{Overview}
\label{ssec:overview}

This work is structured as follows:
After introducing some preliminaries in \secref{sec:preliminaries}, we provide a mathematical formulation of the problem statement in \secref{sec:problemformulation}.
The main body (Secs.~\ref{sec:tuning}-\ref{sec:verification}) builds upon our previous results \cite{Wetzlinger2020}, where we tuned the algorithm parameters based on non-rigorous approximation error bounds using a naive tuning strategy.
Neither inner-approximations nor automated verification/falsification were considered in \cite{Wetzlinger2020}.
In detail, this article contributes the following novelties:
\begin{itemize} 
	\item 
	We derive a rigorous approximation error for the reachable set, based on which we provide an \emph{automated reachability algorithm} (\algref{alg:automated}) that adaptively tunes all algorithm parameters so that any desired error bound in terms of the Hausdorff distance between the exact reachable set and the computed outer-approximation is respected at all times (see \secref{sec:tuning}).
	\item 
	We show how to efficiently extract an \emph{inner-approximation} from the previously computed outer-approximation (see \secref{sec:innerApprox}).
	\item 
	We introduce an \emph{automated verifier} (\algref{alg:verify}), which iteratively refines the accuracy of the outer- and inner-approximations
	until the specifications given by time-varying safe/unsafe sets can be either proven or disproven (see \secref{sec:verification}).
\end{itemize}
\noindent \figref{fig:overview} depicts the relation of individual contributions in Secs.~\ref{sec:tuning}-\ref{sec:verification}.
Finally, we demonstrate the performance of the proposed algorithms on a variety of numerical examples in \secref{sec:numex} and discuss directions for future work in \secref{sec:discussion}.

\begin{figure}[t]
	\setstretch{0.8}
	\setlength{\abovecaptionskip}{0pt}
	\def\xleft{1.25}
\def\xmid{4}
\def\xright{7}
\def\ybot{0.6}
\def\ymid{1.8}
\def\yup{3}
\begin{tikzpicture}[scale = 1,
	roundbox/.style={draw, rounded rectangle, rounded rectangle arc length=90, minimum width=2.5cm, minimum height=0.6cm, align=center, fill=white},
	algbox/.style={draw, minimum width=1.7cm, minimum height=0.6cm, align=center, fill=white},
	contribution/.style={fill=good_gray,draw=black,thin,rounded corners}]



\filldraw[contribution] (0,0) -- (0,1.25) -- (2.5,1.25) -- (2.5,2.35)
	-- (0,2.35) -- (0,3.65) -- (5.3,3.65) -- (5.3,0) -- cycle;
\draw (2.625,3.35) circle[radius=0.22cm] node {\small IV};
\filldraw[contribution] (5.4,3.65) -- (8.9,3.65) -- (8.9,2.45) -- (5.4,2.45) -- cycle;
\draw (8.6,2.8) circle[radius=0.22cm] node {\small V};
\filldraw[contribution] (5.4,2.35) -- (8.9,2.35) -- (8.9,0) -- (5.4,0) -- cycle;
\draw (8.6,2) circle[radius=0.22cm] node {\small VI};

\node[roundbox] (props1to4) at (\xleft,\yup) {\small Prop.~\ref{prop:eps_affine}-\ref{prop:zonorder}, \ref{prop:ecomb}-\ref{prop:errUoneStep} \\ \scriptsize (individual errors)};
\node[algbox] (algstandard) at (\xleft,\ymid) {\small \algref{alg:standard} \\ \scriptsize (base algorithm)};
\node[roundbox] (lemmas1to4) at (\xleft,\ybot) {\small Lemmata~\ref{lmm:E_lim}-\ref{lmm:eUstep_lim} \\ \scriptsize (error behavior)};

\node[roundbox] (propepsY) at (\xmid,\yup) {\small \propref{prop:eps_Y} \\ \scriptsize (output equation)};
\node[algbox] (algautomated) at (\xmid,\ymid) {\small \textbf{\algref{alg:automated}} \\ \scriptsize (outer-approximation)};
\node[roundbox] (thmautomated) at (\xmid,\ybot) {\small \thmref{thm:automated} \\ \scriptsize (convergence)};

\node[roundbox] (propminkDiff) at (\xright,\yup) {\small \propref{prop:minkDiff} \\ \scriptsize (inner-approximation)};
\node[algbox] (algverify) at (\xright,\ymid) {\small \textbf{\algref{alg:verify}} \\ \scriptsize (verification)};
\node[roundbox] (props7to8) at (\xright,\ybot) {\small Prop.~\ref{prop:containment}-\ref{prop:isIntersecting} \\ \scriptsize (containment/intersection)};

\draw[-stealth'] (props1to4.350) -- (algautomated.north west);
\draw[-stealth'] (lemmas1to4) -- (thmautomated);
\draw[-stealth'] (algautomated) -- (thmautomated);
\draw[-stealth'] (algautomated) -- (propepsY);
\draw[-stealth'] (algautomated.north east) -- (propminkDiff.190);
\draw[-stealth'] (propminkDiff) -- (algverify);
\draw[-stealth'] (props7to8) -- (algverify);

\draw[-stealth'] (algstandard.east) -- (algautomated.west);
\draw[-stealth'] (algautomated.east) -- (algverify.west);

\end{tikzpicture}
	\caption{Overview of theoretical contributions in Secs.~\ref{sec:tuning}-\ref{sec:verification}.}
	\label{fig:overview}
\end{figure}

%
%


\section{Preliminaries}
\label{sec:preliminaries}


\subsection{Notation}
\label{ssec:notation}

Scalars and vectors are denoted by lowercase letters, matrices are denoted by uppercase letters.
Given a vector $v \in \R{n}$, $v_{(i)}$ represents the $i$-th entry and $\norm{v}_p$ its $p$-norm.
Similarly, for a matrix $M \in \R{m \times n}$, $M_{(i,\cdot)}$ refers to the $i$-th row and $M_{(\cdot,j)}$ to the $j$-th column.
The identity matrix of dimension~$n$ is denoted by $I_n$, the concatenation of two matrices $M_1, M_2$ by $[M_1~M_2]$, and we use $\matzeros{}$ and $\vecones{}$ to represent vectors and matrices of proper dimension containing only zeros or ones, respectively.
The operation $\diag(v)$ returns a square matrix with the vector~$v$ on its diagonal.
Exact sets are denoted by standard calligraphic letters $\S{}$, outer-approximations by $\widehat{\mathcal{S}}$, and inner-approximations by $\widecheck{\mathcal{S}}$.
The empty set is represented by $\emptyset$.
Moreover, we write $v$ for the set $\{v\}$ consisting only of the point $v$.
We refer to the radius of the smallest hypersphere centered at the origin and enclosing a set $\S{}$ by $\rad(\S{})$.
The operation $\boxOp{\S{}}$ denotes the tightest axis-aligned interval outer-approximation of $\S{}$.
Interval matrices are denoted by bold calligraphic letters: $\intmat{M} = [\underline{M},\overline{M}] = \{ M \in \R{m \times n} ~ | ~ \underline{M} \leq M \leq \overline{M} \}$, where the inequality is evaluated element-wise.
Intervals are a special case of interval matrices, where the lower and upper bounds are vectors.
Additionally, we define an $n$-dimensional hyperball centered at the origin by $\B{\varepsilon} = \big\{x \in \R{n} ~\big|~ \norm{x}_2 \leq \varepsilon \big \} \subset \R{n}$ with respect to the Euclidean norm.
The floor operation $\lfloor x \rfloor$ returns the next smaller integer for a scalar $x$.
For clarify, arguments of functions are sometimes omitted.
Finally, we use $f(t) \sim \bigO{t^a}$ to represent that the function $f(t)$ approaches its limit value ($0$ or infinity in this work) as fast or faster than $t^a$.

\vspace{-0.2cm}
\subsection{Definitions}
\label{ssec:definitions}

All sets are assumed to be compact, convex, and bounded.
In this work, we represent outer-approximations of reachable sets with zonotopes \cite[Def.~1]{Girard2005}:
\begin{definition}[Zonotope] \label{def:zonotope}
Given a center vector $c \in \R{n}$ and a generator matrix $G \in \R{n \times \gens{}}$, a zonotope $\Z{} \subset \R{n}$ is
\begin{equation*}
	\Z{} := \bigg\{ c + \sum_{i = 1}^{\gens} G_{(\cdot,i)} \, \alpha_i ~\bigg| ~ \alpha_i \in [-1,1] \bigg\}.
\end{equation*}
The zonotope order is defined as $\zonorder{} := \frac{\gens{}}{n}$ and we use the shorthand $\Z{} = \zono{c,G}$.
\end{definition}
\noindent Moreover, we represent inner-approximations of reachable sets with constrained zonotopes \cite[Def.~3]{Scott2016}:
\begin{definition}[Constrained zonotope] \label{def:conZonotope}
Given a vector $c \in \R{n}$, a generator matrix $G \in \R{n \times \gens{}}$, a constraint matrix $A \in \R{\cons{} \times \gens{}}$, and a constraint offset $b \in \R{\cons{}}$, a constrained zonotope $\CZ{}~\subset~\R{n}$ is
\begin{equation*}
		\CZ{} := \bigg\{ c + \sum_{i = 1}^{\gens{}} G_{(\cdot,i)} \, \alpha_i ~\bigg| ~ \sum_{i=1}^\gens{} A_{(\cdot, i)} \alpha_i = b, ~ \alpha_i \in [-1,1] \bigg\}.
\end{equation*}
We use the shorthand $\CZ{} = \conZono{c,G,A,b}$.
\end{definition}
\noindent Finally, unsafe sets and safe sets are represented by polytopes \cite[Sec.~1.1]{Ziegler2012}:
\begin{definition}[Polytope] \label{def:polytope}
Given a constraint matrix $C \in \R{\polyCons{} \times n}$ and a constraint offset $d \in \R{\polyCons{}}$, the halfspace representation of a polytope $\P{} \subset \R{n}$ is
\begin{equation*}
	\P{} := \big\{ x \in \R{n}~\big|~ C x \leq d \big\}.
\end{equation*}
Equivalently, one can use the vertex representation
\begin{equation*}
	\P{} := \bigg\{ \sum_{i=1}^{\polyVert} \beta_i \, v_i~\bigg|~ \sum_{i=1}^{\polyVert} \beta_i = 1,~ \beta_i \geq 0 \bigg\},
\end{equation*}	
where $\{v_1, \dotsc, v_\polyVert{}\} \in \R{n}$ are the polytope vertices.
We use the shorthands $\P{} = \poly{C,d}$ and $\P{} = \polyV{[v_1 \dotsc v_\polyVert{}]}$.
\end{definition}
Given the sets $\S{1},\S{2} \subset \R{n}, \S{3} \subset \R{m}$ and a matrix $M \in \R{w \times n}$, we require the set operations linear map $M \S{1}$, Cartesian product $\S{1} \times \S{3}$, Minkowski sum $\S{1} \oplus \S{2}$, Minkowski difference $\S{1} \ominus \S{2}$, and linear combination $\linCombOp{\S{1},\S{2}}$, which are defined as 
\begin{align}
	& M \S{1} = \{ M s ~|~ s \in \S{1} \} , \label{eq:defLinTrans} \\
	& \S{1} \times \S{3} = \{ [s_1^\top ~ s_3^\top]^\top ~ | ~ s_1 \in \S{1}, s_3 \in \S{3} \} , \label{eq:defCartProd} \\
	& \S{1} \oplus \S{2} = \{ s_1 + s_2 ~|~ s_1 \in \S{1}, s_2 \in \S{2} \} , \label{eq:defMinSum} \\
	& \S{1} \ominus \S{2} = \{ s~|~ s \oplus \S{2} \subseteq \S{1} \} , \label{eq:defMinDiff} \\
\begin{split} \label{eq:defLinComb}
	& \linCombOp{\S{1},\S{2}} = \big\{\lambda s_1 + (1-\lambda)s_2~| \\
	& \hspace{75pt} s_1 \in \S{1}, s_2 \in \S{2}, \lambda \in [0,1] \big\}.
\end{split}
\end{align}
For zonotopes $\Z{1} = \zono{c_1,G_1},\Z{2} = \zono{c_2,G_2} \subset \R{n}$, linear map and Minkowski sum can be computed as \cite[Eq.~(2.1)]{Althoff2010a}
\begin{align}
	& M \Z{1} = \zono{M c_1, M G_1} , \label{eq:zonoLinTrans} \\
	& \Z{1} \oplus \Z{2} = \zono{c_1 + c_2, [G_1~G_2]} . \label{eq:zonoMinSum}
\end{align} 
Since the convex hull represents an enclosure of the linear combination, we furthermore obtain \cite[Eq.~(2.2)]{Althoff2010a}
{\setlength{\belowdisplayskip}{2pt}
\begin{align} \label{eq:zonoLinComb}
	\begin{split}
		& \linCombOp{\Z{1},\Z{2}} \subseteq \big \langle 0.5(c_1 + c_2),~\big[ 0.5(c_1 - c_2) \\
	& \qquad \qquad 0.5 (G_1 + G^{(1)}_2) ~~ 0.5 (G_1 - G_2^{(1)}) ~~ G_2^{(2)}  \big] \big \rangle_Z
	\end{split}
\end{align}}%
with
\begin{equation*}
	G_2^{(1)} = [G_{2(\cdot,1)} \, \dots \, G_{2(\cdot,\gens{}_1)}], ~ G_2^{(2)} = [G_{2(\cdot,\gens{}_1+1)} \, \dots \, G_{2(\cdot,\gens{}_2)}],
\end{equation*}
where we assume without loss of generality that $\Z{2}$ has more generators than $\Z{1}$ so that we can write the formula in a compact form.
Later on in \secref{sec:innerApprox}, we show that the Minkowski difference of two zonotopes can be represented as a constrained zonotope.
The multiplication $\intmat{I} \, \Z{}$ of an interval matrix $\intmat{I}$ with a zonotope $\Z{}$ can be outer-approximated as specified in \cite[Thm.~4]{Althoff2007c}, and the operation $\boxOp{\Z{}}$ returns the axis-aligned box outer-approximation according to \cite[Prop.~2.2]{Althoff2010a}.
The representation size of a zonotope can be decreased with zonotope order reduction.
While our reachability algorithm is compatible with all common reduction techniques \cite{Yang2016}, we focus on Girard's method \cite[Sec.~3.4]{Girard2005} for simplicity:
\vspace{-0.1cm}
\begin{definition}[Zonotope order reduction] \label{def:reduce}
Given a zonotope $\Z{} = \zono{c,G} \subset \R{n}$ and a desired zonotope order $\overzonorder{} \geq 1$, the operation $\redOp{\Z{},\overzonorder{}} \supseteq \Z{}$ returns an enclosing zonotope with order smaller or equal to $\overzonorder{}$:
{\setlength{\belowdisplayskip}{2pt}
\begin{align*}
	\redOp{\Z{},\overzonorder{}} = \zono{c,[\Gkeep{}~\Gred{}]} ,
\end{align*}}%
with
{\setlength{\abovedisplayskip}{2pt}
\begin{equation*}
	\Gkeep{} = [ G_{(\cdot,\pi_{\chi+1})} ~\dots~G_{(\cdot,\pi_\gens{})}],
	\;\;
	\Gred{} = \diag \bigg (\sum_{i=1}^\chi \big | G_{(\cdot,\pi_i)} \big|\bigg),
\end{equation*}}%
where $\pi_1,\dots,\pi_\gens$ are the indices of the sorted generators
\begin{equation*}
	\| G_{(\cdot,\pi_1)} \|_1 - \| G_{(\cdot,\pi_1)} \|_\infty \leq ... \leq \| G_{(\cdot,\pi_\gens{})} \|_1 - \| G_{(\cdot,\pi_\gens{})} \|_\infty,
\end{equation*}
and $\chi = \gens{} - \lfloor (\overzonorder{} -1) n \rfloor$ is the number of reduced generators.
\end{definition}
For distances between sets, we use the Hausdorff distance:
\begin{definition}[Hausdorff distance] \label{def:dH}
For two compact sets $\S{1}, \S{2} \subseteq \R{n}$, the Hausdorff distance with respect to the Euclidean norm is defined as
\begin{align}
\begin{split} \label{eq:dH}
	d_H (\S{1}, \S{2})
	&= \max \Big\{
		\max_{s_1 \in \S{1}}
		\Big( \min_{s_2 \in \S{2}} \, \norm{s_1 - s_2}_2 \Big), \\
	&\qquad \qquad \quad \max_{s_2 \in \S{2}}
		\Big( \min_{s_1 \in \S{1}} \norm{s_1 - s_2}_2 \Big)
	\Big\} .
\end{split}
\end{align}
Using a hyperball $\B{\varepsilon}$ of radius $\varepsilon$, an alternative definition is
\begin{equation} \label{eq:dHball}
	d_H (\S{1}, \S{2}) = \varepsilon
	\, \Leftrightarrow \,
	\S{2} \subseteq \S{1} \oplus \B{\varepsilon} \wedge
	\S{1} \subseteq \S{2} \oplus \B{\varepsilon} .
\end{equation}
\end{definition}
\noindent
Moreover, we will frequently use the operator 
\begin{equation} \label{eq:errOp}
	\errOp{\S{}} := \rad( \boxOp{\S{}} ) .
\end{equation}
As an immediate consequence of \eqref{eq:errOp}, we obtain
\begin{equation} \label{eq:dH<=err}
	\dH{\matzeros{},\S{}} \overset{\matzeros{} \in \S{}}{\leq} \errOp{\S{}} ,
\end{equation}
which states that the Hausdorff distance between a set and the origin can be bounded by the radius of an enclosing hyperball.


\section{Problem Formulation}
\label{sec:problemformulation}

We consider linear time-invariant systems of the form
\begin{align}
	\dot{x}(t)	&= Ax(t) + Bu(t) + p, \label{eq:linsys} \\
	y(t)		&= Cx(t) + Wv(t) + q, \label{eq:output}
\end{align}
with $A \in \R{n \times n}$, $B \in \R{n \times m}$, $C \in \R{\ell \times n}$, $W \in \R{\ell \times o}$, where $x(t) \in \R{n}$ is the state, $u(t) \in \R{m}$ is the input, $y(t) \in \R{\ell}$ is the output, $v(t) \in \R{o}$ is a measurement error, $p \in \R{n}$ is a constant input, and $q \in \R{\ell}$ is a constant offset on the output.
The initial state $x(t_0)$ is uncertain within the initial set $\initset{} \subset \R{n}$, the input $u(t)$ is uncertain within the input set $\inputset{} \subset \R{m}$, and $v(t)$ is uncertain within the set of measurement errors $\mathcal{V} \subset \R{o}$.
In this work, we assume that $\initset{}$, $\inputset{}$, and $\mathcal{V}$ are represented by zonotopes.
Using $\inputset{} = \zono{c_u,G_u}$, we define the vector $\uTrans{} = B c_u + p \in \R{n}$ and the set $\inputsetzero{} = \zono{\matzeros{}, B G_u} \subset \R{n}$ for later derivations.
Please note that we assume a constant vector $\uTrans{}$ to keep the presentation simple, but the extension to time-varying inputs $\uTrans{}(t)$ is straightforward.
Without loss of generality, we set the initial time to $t_0 = 0$ and the time horizon to $[0,\tFinal{}]$.
The reachable set is defined as follows:

\begin{definition}[Reachable set] \label{def:R}
Let us denote the solution to \eqref{eq:linsys} for the initial state $x(0)$ and the input signal $u(\cdot)$ by $\xi(t;x(0),u(\cdot))$.
Given an initial set $\initset{}$ and an input set $\inputset{}$, the reachable set at time~$t \geq 0$ is
\begin{equation*}
	\Rti{t} := \big\{ \xi(t;x(0),u(\cdot)) ~ \big| ~ x(0) \in \initset{}, \, \forall \theta \in [0,t]\!: u(\theta) \in \inputset{} \big\} .
\end{equation*}
We denote the time-point reachable set at time $t = t_k$ by $\Rti{t_k}$ and the time-interval reachable set over $\tau_k \in [t_k,t_{k+1}]$ by $\Rti{\tau_k} := \bigcup_{t \in [t_k,t_{k+1}]} \Rti{t}$.
\end{definition}

Since the exact reachable set as defined in \defref{def:R} cannot be computed for general linear systems \cite{Gan2018}, we aim to compute tight outer-approximations $\overRtp{t} \supseteq \Rtp{t}$ and inner-approximations $\underRtp{t} \subseteq \Rtp{t}$ instead.

\begin{algorithm}[t]
\caption{Reachability algorithm (manual tuning)} \label{alg:standard}
\textbf{Require:} Linear system $\dot{x} = Ax + Bu + p$,
	initial set~$\initset{} = \zono{c_x,G_x}$, input set~$\inputset{} = \zono{c_u,G_u}$, time horizon~$\tFinal{}$,
	time step size~$\Delta t$ dividing the time horizon into an integer number of steps, truncation order $\tayl{}$, zonotope order~$\zonorder{}$
	
\textbf{Ensure:} Outer-approximation of the reachable set $\overRti{[0,\tFinal{}]}$

\setstretch{1.2}
\begin{algorithmic}[1]
\State $t_0 \gets 0, \uTrans{} \gets B c_u + p$, $\inputsetzero{} \gets \zono{\matzeros{},B G_u}$
\State $\HPtp{t_0} \gets \initset{}, \Pu{t_0} \gets \matzeros{}, \overPU{t_0} \gets \matzeros{}$
\State $\Pu{\Delta t} \gets$ Eq.~\eqref{eq:Pu_init}, $\overPU{\Delta t} \gets$ Eq.~\eqref{eq:overPU_init}
	\label{alg:standard:overPU/Pu_init}
\For{$k \gets 0$ to $\frac{\tFinal{}}{\Delta t}-1$}
	\State $t_{k+1} \gets t_k + \Delta t, \tau_{k+1} \gets [t_k,t_{k+1}]$
	\State $\Pu{t_{k+1}} \gets \Pu{t_k} \oplus e^{At_k} \Pu{\Delta t}$
		\label{alg:standard:Pu_k}
	\State $\HPtp{t_{k+1}} \gets e^{A t_{k+1}} \initset{} + \Pu{t_{k+1}}$
		\label{alg:standard:HPtp_k}
	\State $\C{} \gets \Fx{} \HPtp{t_k} \oplus \Fu{} \uTrans{}$
		\Comment{see \eqref{eq:Fx}-\eqref{eq:Fu}}
		\label{alg:standard:C}
	\State $\overHPti{\tau_k} \gets \linCombOp{\HPtp{t_k}, \HPtp{t_{k+1}}} \oplus \C{}$
		\label{alg:standard:overHPti_k}
	\State $\overPU{t_{k+1}} \gets \redOp{ \overPU{t_k} \oplus e^{A t_k} \overPU{\Delta t}, \zonorder{} }$
		\label{alg:standard:overPU_k}
	\State $\overRti{\tau_k} \gets \overHPti{\tau_k} \oplus \overPU{t_{k+1}}$
		\label{alg:standard:overRti}
\EndFor
\State $\overRti{[0,\tFinal{}]} \gets \bigcup_{k=0}^{\frac{\tFinal{}}{\Delta t}-1} \overRti{\tau_k}$
	\label{alg:standard:fullR}
\end{algorithmic}
\end{algorithm}

\begin{figure}[b]
	\centering
	\begin{tikzpicture}[scale = 1]


\filldraw[fill=good_gray,draw=good_blue,thick] (1,0.5) -- (0.5,1.25) -- (1,2) -- (2,1.5) -- (1.75,0.75) -- cycle;
\node[align=center] at (1.25,1.25) {$\initset{}$};
\filldraw[fill=good_gray,draw=good_blue,thick] (1.5,2.7) -- (1.25,3.7) -- (2,4.2) -- (2.75,3.45) -- (2.25,2.7) -- cycle;
\node[align=center] at (2,3.4) {$e^{A\Delta t} \initset{}$};

\begin{scope}[xshift=2.75cm]
\filldraw[fill=good_gray,draw=good_blue,thick] (1,0.5) -- (0.5,1.25) -- 
	(1.25,3.7) -- (2,4.2) -- (2.75,3.45) -- 
	(1.75,0.75) -- cycle; 
\draw[good_blue] (1,0.5) -- (0.5,1.25) -- (1,2) -- (2,1.5) -- (1.75,0.75) -- cycle;
\draw[good_blue] (1.5,2.7) -- (1.25,3.7) -- (2,4.2) -- (2.75,3.45) -- (2.25,2.7) -- cycle;
\node[align=center] at (1.6,2.3) {$\auxlinComb(\cdot)$};
\end{scope}

\begin{scope}[xshift=5.5cm]
\filldraw[fill=good_gray,draw=good_blue,thick] (0.9,0.3) -- (0.3,1.3) -- 
	(1.05,3.75) -- (2.05,4.45) -- (3,3.45) -- 
	(2,0.7) -- cycle; 
\draw[good_blue] (1,0.5) -- (0.5,1.25) -- 
	(1.25,3.7) -- (2,4.2) -- (2.75,3.45) -- 
	(1.75,0.75) -- cycle; 
\draw[good_blue] (0.5,1.25) -- (1,2) -- (2,1.5);
\draw[good_blue] (2.75,3.45) -- (2.25,2.7) -- (1.5,2.7) -- (1.25,3.7);
\node[align=center] at (1.625,2.3) {$\auxlinComb(\cdot) \oplus \C{}$};
\end{scope}

\node at (0.75,4) {1)};
\node at (3.5,4) {2)};
\node at (6.25,4) {3)};

\end{tikzpicture}
	\caption{Visualization of the three steps required to compute the homogeneous solution of the first time interval, adapted from \cite[Fig.~3.1]{Althoff2010a}.}
	\label{fig:affine3steps}
\end{figure}

Our automated tuning approach is based on \algref{alg:standard}, which is a slight modification of the wrapping-free propagation-based reachability algorithm \cite{LeGuernic2010}.
Fundamentally, one exploits the superposition principle of linear systems by separately computing the homogeneous and particular solutions, which are then combined in \lineref{alg:standard:overRti} to yield the overall reachable set for each time interval.
The homogeneous solution can be enclosed using the following three steps visualized in \figref{fig:affine3steps}:
%
%
\begin{enumerate}
	\item Compute the time-point solution by propagating the initial set with the exponential matrix $e^{A \Delta t}$.
	\item Approximate the time-interval solution with the linear combination \eqref{eq:zonoLinComb}, which would only enclose straight-line trajectories.
	\item Account for the curvature of the trajectories by enlarging the linear combination with the set $\C{}$.
\end{enumerate}
The curvature enclosure $\C{}$ is computed in \lineref{alg:standard:C} of \algref{alg:standard} using the interval matrices \cite[Sec.~3.2]{Althoff2010a}
\begin{align}
	&\Fx{} = \bigoplus_{i=2}^{\tayl{}} \mathcal{I}_i(\Delta t) \, \frac{A^i}{i!} \oplus \E{} \label{eq:Fx} \\
	&\Fu{} = \bigoplus_{i=2}^{\tayl{}+1} \mathcal{I}_i(\Delta t) \, \frac{A^{i-1}}{i!} \oplus \E{}\Delta t \label{eq:Fu} \\
	&\text{with} \quad \mathcal{I}_i(\Delta t) = \big[\big( i^{\frac{-i}{i-1}} - i^{\frac{-1}{i-1}} \big) \Delta t^i,0\big] , \label{eq:I}
\end{align}
where the interval matrix $\E{k}$ represents the remainder of the exponential matrix \cite[Eq.~(3.2)]{Althoff2010a}:
\begin{align}
\begin{split} \label{eq:E}
	\E{} &= [-E(\Delta t,\tayl{}),E(\Delta t,\tayl{})] , \\
	E(\Delta t, \tayl{}) &= e^{|A|\Delta t} - \sum_{i=0}^{\tayl{}} \frac{\big( |A|\Delta t \big)^i}{i!} .
\end{split}
\end{align}
\noindent To increase the tightness, the particular solution $\Pu{\Delta t}$ due to the constant input $\uTrans{}$ \cite[Eq.~(3.7)]{Althoff2010a},
\begin{equation} \label{eq:Pu_init}
	\Pu{\Delta t} = A^{-1} (e^{A\Delta t} - I_n) \, \uTrans{} ,
\end{equation}
is added to the homogeneous solution in \lineref{alg:standard:HPtp_k}.
If the matrix $A$ is not invertible, we can integrate $A^{-1}$ in the power series of the exponential matrix to compute $\Pu{\Delta t}$.
The particular solution due to the time-varying input within the set $\inputsetzero{}$ can be enclosed by \cite[Eq.~(3.7)]{Althoff2010a}
\begin{equation} \label{eq:overPU_init}
	\overPU{\Delta t}
	= \bigoplus_{i=0}^{\tayl{}} \frac{A^i \Delta t^{i+1}}{(i+1)!} \, \inputsetzero{}
		\oplus \E{} \Delta t \, \inputsetzero{} .
\end{equation}
Finally, the reachable set $\overRti{[0,\tFinal{}]}$ for the entire time horizon is given by the union of the sets for individual time-interval reachable sets according to \lineref{alg:standard:fullR}.

As for all other state-of-the-art reachability algorithms  \cite{Girard2005,Girard2006,LeGuernic2010,Althoff2010a,Frehse2011,Wetzlinger2020}, the main disadvantage of \algref{alg:standard} is that the tightness of the computed reachable set $\overRtp{t}$ is unknown and heavily depends on the chosen time step size $\Delta t$, truncation order $\tayl{}$, and zonotope order $\zonorder{}$.
In this work, we solve both issues by automatically tuning these algorithm parameters such that the Hausdorff distance between the computed enclosure $\overRtp{t}$ and the exact reachable set $\Rtp{t}$ remains below a desired threshold $\emax{}$ at all times:
\begin{equation*} 
	\text{Tune~} \Delta t, \tayl{}, \zonorder{} \quad \text{s.t.} \quad
	\forall t \in [0,\tFinal{}]: \dH{\Rtp{t},\overRtp{t}} \leq \emax{} .
\end{equation*}
We will further utilize this result to efficiently extract an inner-approximation of the reachable set (\secref{sec:innerApprox}) and construct a fully automated verification algorithm (\secref{sec:verification}).


\section{Automated Parameter Tuning}
\label{sec:tuning}

Let us now present our approach for the automated tuning of algorithm parameters.
While \algref{alg:standard} uses fixed values for $\Delta t$, $\tayl{}$, and $\zonorder{}$, we tune different values $\Delta t_k$, $\tayl{k}$, and $\zonorder{k}$ in each step~$k$ based on the induced outer-approximation error in order to satisfy the error bound at all times.
To achieve this, we first derive closed-form expressions describing how the individual errors depend on the values of each parameter in \secref{ssec:errors}. 
Next, we present our automated parameter tuning algorithm in \secref{ssec:automated} and prove its convergence in \secref{ssec:proof}.
Finally, we discuss further improvements to the algorithm in \secref{ssec:tuningmethods} and describe the extension to output sets in \secref{ssec:outputset}.

\subsection{Error Measures}
\label{ssec:errors}

\noindent Several sources of outer-approximation errors exist in \algref{alg:standard}:
\begin{enumerate}
	\item \textbf{Affine dynamics (\lineref{alg:standard:overHPti_k})}:
		The time-interval solution $\overHPti{\tau_k}$ of the affine dynamics
		contains errors originating from enclosing the linear combination by zonotopes
		and the set~$\C{}$ accounting for the curvature of trajectories.
	\item \textbf{Particular solution (\linesref{alg:standard:overPU_k}{alg:standard:overRti})}:
	    Using the outer-approximation $\overPU{t_k}$ based on \eqref{eq:overPU_init} for the particular solution
		due to the input set $\inputsetzero{}$ induces another error.
		Moreover, the Minkowski addition of $\overPU{t_{k+1}}$ to $\overHPti{\tau_k}$
		is outer-approximative as it ignores dependencies in time.
	\item \textbf{Zonotope order reduction (\lineref{alg:standard:overPU_k})}:
		The representation size of the particular solution $\overPU{t_k}$
		has to be reduced, which induces another error.
\end{enumerate}
\noindent In this subsection, we derive upper bounds for all these errors in terms of the Hausdorff distance between an exact set $\S{}$ and the computed outer-approximation $\widehat{\mathcal{S}}$.
We will use two different albeit related error notations:
New errors induced in step~$k$ are denoted by $\Delta \varepsilon^*_k(\Delta t_k, \tayl{k})$ and $\Delta \varepsilon^*_k(\zonorder{k})$ to emphasize the dependence on the respective algorithm parameters (using $*$ as a placeholder for various superscripts). 
Some errors for single time steps add up over time.
We accumulate all previous errors $\varepsilon^*_k(\cdot)$ until time $t_k$ by
{\setlength{\abovedisplayskip}{4pt} 
\begin{equation} \label{eq:eaccprop}
	\varepsilon^*_{k+1} = \varepsilon^*_k + \Delta \varepsilon^*_k(\cdot) , \quad \varepsilon^*_0 := 0 .
\end{equation}}%
%
Let us now derive closed-form expressions for all errors.

\subsubsection{Affine Dynamics}
\label{par:affine}

We start by determining the error contained in the computed outer-approximation $\overHPti{\tau_k}$ of the time-interval solution for the affine dynamics $\dot{x}(t) = Ax(t) + \uTrans{}$:

\begin{proposition}[Affine dynamics error] \label{prop:eps_affine}
Given the set $\HPtp{t_k} = \zono{c_h,G_h}$ with $G_h \in \R{n \times \gens{}_h}$, the Hausdorff distance between the exact time-interval solution of the affine dynamics
{\setlength{\abovedisplayskip}{2pt}
\begin{align*}
	\HPti{\tau_k} &= \big\{ e^{At} x(t_k) + A^{-1}(e^{A (t - t_k)} - I_n) \uTrans{} ~ \big| \\
		&\qquad \quad t \in \tau_k, \, x(t_k) \in \HPtp{t_k} \big\}
\end{align*}}%
and the corresponding outer-approximation
\begin{equation} \label{eq:overHPti}
	\overHPti{\tau_k} = \linCombOp{\HPtp{t_k}, \HPtp{t_{k+1}}} \oplus \C{} \\
\end{equation}
in \lineref{alg:standard:overHPti_k} of \algref{alg:standard} is bounded by
\begin{align}
	& \dH{\HPti{\tau_k},\overHPti{\tau_k}} \nonumber \\
	& \hspace{12pt} \leq \ehom{k} := 2 \errOp{ \C{} } + \sqrt{\gens{}_h} \, \norm{G_h^{(-)}}_2 , \label{eq:ehom}
\end{align}
where $G^{(-)}_h = (e^{A\Delta t_k} - I_n) G_h$.
\end{proposition}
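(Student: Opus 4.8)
The plan is to reduce the symmetric Hausdorff distance to a single one-sided inclusion. Since $\HPtp{t_k}$ and $\HPtp{t_{k+1}}$ are propagated \emph{exactly} in \algref{alg:standard} (\linesref{alg:standard:Pu_k}{alg:standard:HPtp_k}), the time-point sets are related by the affine flow $\phi$ of $\dot{x} = Ax + \uTrans{}$ over one step, i.e. $\HPtp{t_{k+1}} = \phi(\HPtp{t_k}) = \zono{c_2, e^{A\Delta t_k}G_h}$ for a suitable center $c_2$. The curvature construction of \cite{Althoff2010a} guarantees that every exact point of $\HPti{\tau_k}$ equals the chord point of its generating trajectory---an element of $\linCombOp{\HPtp{t_k},\HPtp{t_{k+1}}}$---plus a correction lying in $\C{}$, so that $\HPti{\tau_k} \subseteq \overHPti{\tau_k}$ and the first supremum in \eqref{eq:dH} vanishes. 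By \eqref{eq:dHball} it then suffices to exhibit, for every point of $\overHPti{\tau_k}$, an exact point within distance $\ehom{k}$.

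First I would fix a point of $\overHPti{\tau_k}$ and write it as $\hat s + c$ with $c \in \C{}$ and $\hat s$ in the zonotope enclosure of the linear combination given by \eqref{eq:zonoLinComb}. Because $\HPtp{t_{k+1}}$ has the same generators $e^{A\Delta t_k}G_h$ as the mapped start set, \eqref{eq:zonoLinComb} produces no block $G_2^{(2)}$, and $\hat s$ is parametrized by a center weight $\lambda \in [0,1]$ together with two generator vectors $\mu_1,\mu_2 \in [-1,1]^{\gens_h}$. I would then compare $\hat s$ with the genuine chord point $\ell = \lambda(c_h + G_h\mu_1) + (1-\lambda)\big(c_2 + e^{A\Delta t_k}G_h\mu_1\big)$ of the single trajectory through $c_h + G_h\mu_1$, which lies in $\linCombOp{\HPtp{t_k},\HPtp{t_{k+1}}}$. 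The center contributions cancel and, with $G_h^{(-)} = (e^{A\Delta t_k}-I_n)G_h$, the residual collapses to
\[ \hat s - \ell = G_h^{(-)}\big((\lambda-\tfrac12)\mu_1 - \tfrac12\mu_2\big). \]
Every entry of $(\lambda-\tfrac12)\mu_1 - \tfrac12\mu_2$ lies in $[-1,1]$, so its Euclidean norm is at most $\sqrt{\gens_h}$ and $\norm{\hat s - \ell}_2 \le \sqrt{\gens_h}\,\norm{G_h^{(-)}}_2$, which is the second summand of $\ehom{k}$.

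It remains to turn $\ell$ into an exact point and to absorb $c$. By the curvature decomposition, $\ell = p - \kappa$ for some $p \in \HPti{\tau_k}$ and some $\kappa \in \C{}$, whence $\hat s + c - p = (\hat s - \ell) + (c - \kappa)$. As $c,\kappa \in \C{} \subseteq \boxOp{\C{}}$, the triangle inequality gives $\norm{c-\kappa}_2 \le \norm{c}_2 + \norm{\kappa}_2 \le 2\,\rad(\boxOp{\C{}}) = 2\,\errOp{\C{}}$ by \eqref{eq:errOp}---the origin of the factor two. The triangle inequality then yields $\norm{\hat s + c - p}_2 \le 2\,\errOp{\C{}} + \sqrt{\gens_h}\,\norm{G_h^{(-)}}_2 = \ehom{k}$ with $p \in \HPti{\tau_k}$, giving the inclusion $\overHPti{\tau_k} \subseteq \HPti{\tau_k} \oplus \B{\ehom{k}}$ and hence the claim. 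I expect the crux to be the middle step: one must pin the free generator parameters of the convex-hull enclosure to an actual trajectory (the identification of the trajectory parameter with $\mu_1$) so that the residual is governed by the \emph{increment} $G_h^{(-)}$ rather than by the full mapped generator matrix $e^{A\Delta t_k}G_h$, and one must invoke the pointwise---not merely set-valued---form of the curvature enclosure from \cite{Althoff2010a} to ensure the matching correction $\kappa$ truly lies in $\C{}$.
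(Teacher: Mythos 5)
Your proof is correct and follows essentially the same route as the paper: both decompose the error into the zonotope-enclosure error of the linear combination, bounded by $\sqrt{\gens{}_h}\,\norm{G_h^{(-)}}_2$, plus the curvature contribution $2\errOp{\C{}}$. The only difference is presentational --- you re-derive the linear-combination bound inline via an explicit pointwise residual against a chord point (the paper delegates this to \propref{prop:ecomb} in the Appendix) and phrase the final step pointwise rather than via the Minkowski-difference inner-approximation, which yields the identical bound.
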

\begin{proof}
An inner-approximation of $\HPti{\tau_k}$ is given by 
\begin{equation*}
	\innerHti{\tau_k} = \linCombOp{\HPtp{t_k},\HPtp{t_{k+1}}} \ominus \mathcal{B}_\mu \ominus \C{} \subseteq \HPti{\tau_k} ,
\end{equation*}
where we additionally have to subtract a hyperball of radius $\mu = \sqrt{\gens{}_h} \, \big\| G_h^{(-)} \big\|_2$ bounding the Hausdorff distance between the exact linear combination \eqref{eq:defLinComb} and the zonotope enclosure \eqref{eq:zonoLinComb} according to \propref{prop:ecomb} in Appendix~A.
The error $\ehom{k}$ bounds the distance between $\innerHti{\tau_k}$ and the outer-approximation $\overHPti{\tau_k}$ in \eqref{eq:overHPti}. 
\end{proof}

\subsubsection{Particular Solution}
\label{par:PU}

We first account for the error induced by enclosing the exact time-point solution $\PU{t_{k+1}}$ with the outer-approximation $\overPU{t_{k+1}}$:
\begin{proposition}[Time-point error in particular solution] \label{prop:eaccU}
The Hausdorff distance between the exact particular solution
\begin{equation*}
	\PU{t_{k+1}} = \bigg\{ \int_0^{t_{k+1}} e^{A(t_{k+1}-\theta)} u(\theta) \, \mathrm{d}\theta ~ \bigg| ~ u(\theta) \in \inputsetzero{} \bigg\}
\end{equation*}
and the recursively computed outer-approximation
\begin{equation*}
	\overPU{t_{k+1}} = \overPU{t_k} \oplus e^{At_k} \overPU{\Delta t_k}
\end{equation*}
with $\overPU{\Delta t_k}$ computed according to \eqref{eq:overPU_init} is bounded by
\begin{equation} \label{eq:eaccU}
	\eaccU{k+1} = \eaccU{k} + \eaccUstep{k} ,
\end{equation}
where the error $\eaccUstep{k}$ for one time step is bounded by
\begin{align}
\begin{split} \label{eq:eaccUstep}
	&\eaccUstep{k} \\
	&\quad := \errOp{ \hspace{-1pt} e^{At_k} \bigg( \hspace{-2pt} \Big( \sum_{i=1}^{\tayl{k}} \tilde{A}_i \Big) \inputsetzero{}
		\oplus \E{k} \Delta t_k \, \inputsetzero{} \bigg) \hspace{-3pt} } \\
	&\qquad\; + \errOp{ \hspace{-1pt} e^{At_k} \bigg( \bigoplus_{i=1}^{\tayl{k}} \tilde{A}_i \, \inputsetzero{}
		\oplus \E{k} \Delta t_k \, \inputsetzero{} \bigg) \hspace{-3pt} }
\end{split}
\end{align}
with $\tilde{A}_i = \frac{A^i \Delta t_k^{i+1}}{(i+1)!}$ and $\E{k}$ from \eqref{eq:E}.
\end{proposition}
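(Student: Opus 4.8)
The plan is to pair the computed recursion $\overPU{t_{k+1}} = \overPU{t_k}\oplus e^{At_k}\overPU{\Delta t_k}$ with an exact analogue and then transport a single-step error along it. First I would split the defining integral of $\PU{t_{k+1}}$ at the intermediate time $t_k$; substituting $\theta\mapsto\theta-t_k$ in the integral over $[t_k,t_{k+1}]$ and factoring out $e^{At_k}$, the time-invariance of $\inputsetzero{}$ yields the wrapping-free recursion $\PU{t_{k+1}} = \PU{t_k}\oplus e^{At_k}\PU{\Delta t_k}$, with $\PU{\Delta t_k} = \int_0^{\Delta t_k} e^{As}\,\inputsetzero{}\,\mathrm{d}s$ the single-step particular solution. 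Since the Hausdorff distance is subadditive under Minkowski sums (immediate from \eqref{eq:dHball} and $\B{\varepsilon_1}\oplus\B{\varepsilon_2}=\B{\varepsilon_1+\varepsilon_2}$), both recursions give
\[
	\dH{\PU{t_{k+1}},\overPU{t_{k+1}}} \le \dH{\PU{t_k},\overPU{t_k}} + \dH{e^{At_k}\PU{\Delta t_k},\,e^{At_k}\overPU{\Delta t_k}} .
\]
Starting from $\PU{t_0}=\overPU{t_0}=\matzeros{}$, so that the distance vanishes initially, the claimed bound \eqref{eq:eaccU} then follows by induction once the single-step bound $\dH{e^{At_k}\PU{\Delta t_k},e^{At_k}\overPU{\Delta t_k}}\le\eaccUstep{k}$ is established.

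For the single-step bound I would mirror the inner-approximation argument used for \propref{prop:eps_affine}. Since $\PU{\Delta t_k}\subseteq\overPU{\Delta t_k}$ and $e^{At_k}$ is linear, the mapped exact set lies inside the mapped enclosure, so only the outward protrusion has to be measured. Separating the exact order-zero term $\tilde{A}_0\inputsetzero{}=\Delta t_k\inputsetzero{}$ lets me write $\overPU{\Delta t_k}=\Delta t_k\inputsetzero{}\oplus\mathcal{D}_\oplus$ with $\mathcal{D}_\oplus=\bigoplus_{i=1}^{\tayl{k}}\tilde{A}_i\inputsetzero{}\oplus\E{k}\Delta t_k\inputsetzero{}$, and I set $\mathcal{D}_\Sigma=\big(\sum_{i=1}^{\tayl{k}}\tilde{A}_i\big)\inputsetzero{}\oplus\E{k}\Delta t_k\inputsetzero{}$. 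I would then show that eroding the mapped enclosure by $e^{At_k}\mathcal{D}_\oplus$ and $e^{At_k}\mathcal{D}_\Sigma$ produces an inner-approximation of $e^{At_k}\PU{\Delta t_k}$: convex cancellation gives $e^{At_k}\overPU{\Delta t_k}\ominus e^{At_k}\mathcal{D}_\oplus=e^{At_k}\Delta t_k\inputsetzero{}$, and, as $e^{At_k}$ is invertible, it remains to verify $\Delta t_k\inputsetzero{}\ominus\mathcal{D}_\Sigma\subseteq\PU{\Delta t_k}$. Because the two eroded sets contain the origin and are hence enclosed by balls of radii $\errOp{e^{At_k}\mathcal{D}_\Sigma}$ and $\errOp{e^{At_k}\mathcal{D}_\oplus}$, these two $\errOp{\cdot}$ terms bound the distance between the inner-approximation and the outer-approximation exactly as in \propref{prop:eps_affine}, which is precisely $\eaccUstep{k}$.

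The main obstacle is the inclusion $\Delta t_k\inputsetzero{}\ominus\mathcal{D}_\Sigma\subseteq\PU{\Delta t_k}$, which is where the set-valued (Aumann) integral defining $\PU{\Delta t_k}$ must be handled. I would exploit that $\inputsetzero{}$ is symmetric about the origin and use the constant-input inner-approximation $\big(\int_0^{\Delta t_k}e^{As}\,\mathrm{d}s\big)\inputsetzero{}=\big(\sum_{i=0}^{\infty}\tilde{A}_i\big)\inputsetzero{}\subseteq\PU{\Delta t_k}$ together with the fact that the remainder interval matrix $\E{k}$ bounds the truncated series tail, so that the inclusion follows from a support-function (reverse triangle) estimate. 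The delicate point is that a single remainder set $\E{k}\Delta t_k\inputsetzero{}$ must simultaneously control two distinct over-approximation mechanisms—the replacement of the integral of the Taylor terms by the Minkowski sum $\mathcal{D}_\oplus$, and the splitting of the summed map $\sum_{i\ge1}\tilde{A}_i$ into separate maps—which is exactly why it reappears in both $\errOp{\cdot}$ arguments of $\eaccUstep{k}$, mirroring the doubled curvature term $2\errOp{\C{}}$ in \propref{prop:eps_affine}. The recursion, subadditivity, and erosion steps are then routine.
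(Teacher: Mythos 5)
Your proposal is correct and takes essentially the same route as the paper: the paper also reduces \eqref{eq:eaccU} to a single-step bound via the recursive error-propagation formula \eqref{eq:eaccprop}, and proves the single-step bound (\propref{prop:errUoneStep} in the Appendix) by comparing $e^{At_k}\overPU{\Delta t_k}$ with the constant-input inner-approximation $\big(\Delta t_k I_n + \sum_{i=1}^{\infty}\tilde{A}_i\big)\inputsetzero{}$, pivoting through $\Delta t_k\,\inputsetzero{}$ so that the remainder set $\E{k}\Delta t_k\,\inputsetzero{}$ contributes to both $\auxerr$-terms. Your erosion/Minkowski-difference phrasing and your explicit justification of the recursion via subadditivity of $d_H$ under Minkowski sums are only cosmetic variants of the paper's triangle inequality through the pivot set and its use of \propref{prop:dH_MS}.
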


\begin{proof}
The error $\eaccUstep{k}$ for one time step is given by the Hausdorff distance between the computed outer-approximation and an inner-approximation obtained by considering constant inputs according to \propref{prop:errUoneStep} in Appendix~A.
The overall error $\eaccU{k+1}$ follows by error propagation as in \eqref{eq:eaccprop}.
\end{proof}


\noindent Since $0 \in \inputsetzero{}$, the added set due to uncertain inputs $e^{At_k} \overPU{\theta}$ is equal to $\matzeros{}$ at the beginning of each time step ($\theta = 0$) and monotonically grows towards the set $e^{At_k} \overPU{\Delta t_k}$ at the end of the time step ($\theta = \Delta t_k$) as shown in \figref{fig:eUstep} on the left.
The Minkowski sum in \lineref{alg:standard:overHPti_k} of \algref{alg:standard} ignores this dependency on time, inducing another outer-approximation error:
\begin{proposition}[Time-interval error in particular solution] \label{prop:eaccUstep}
The maximum Hausdorff distance at any time $t \in \tau_k = [t_k,t_{k+1}]$ between the exact time-interval particular solution
\begin{equation*}
	\PU{t} = \bigg\{ \int_{0}^{t} e^{A(t - \theta)} u(\theta) \mathrm{d}\theta ~ \bigg| ~ u(\theta) \in \inputsetzero{} \bigg\}
\end{equation*}
and the outer-approximation $\forall t \in \tau_k: \PU{t} \subseteq \overPU{t_{k+1}}$ is bounded by
\begin{equation*}
	\max_{t \in [t_k,t_{k+1}]} \dH{\PU{t},\overPU{t_{k+1}}} \leq \eaccU{k} + \eUstep{k} \\
\end{equation*}
with $\eaccU{k}$ from \propref{prop:eaccU} and the additional error
\begin{equation} \label{eq:eUstep}
	\eUstep{k} := \errOp{e^{At_k} \overPU{\Delta t_k}} .
\end{equation}
\end{proposition}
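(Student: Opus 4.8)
The plan is to exploit the containment $\PU{t}\subseteq\overPU{t_{k+1}}$ asserted in the statement: by the ball characterization \eqref{eq:dHball} of the Hausdorff distance, the inclusion $\PU{t}\subseteq\overPU{t_{k+1}}\oplus\B{\varepsilon}$ then holds trivially for every $\varepsilon\ge0$, so that $\dH{\PU{t},\overPU{t_{k+1}}}$ equals the smallest $\varepsilon$ for which the reverse inclusion $\overPU{t_{k+1}}\subseteq\PU{t}\oplus\B{\varepsilon}$ holds, i.e.\ the one-sided distance from $\overPU{t_{k+1}}$ to $\PU{t}$. First I would therefore reduce the proposition to uniformly bounding this one-sided distance over all $t\in\tau_k$.

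Next I would split the error along the recursive structure $\overPU{t_{k+1}}=\overPU{t_k}\oplus e^{At_k}\overPU{\Delta t_k}$ used in the algorithm, comparing it with the \emph{correct} time-interval solution $\overPU{\tau_k}=\overPU{t_k}\oplus e^{At_k}\overPU{[0,\Delta t_k]}$ sketched in \figref{fig:eUstep}, in which the fresh contribution is grown only up to the respective time instead of being taken at its endpoint value. Using subadditivity of the Hausdorff distance under Minkowski sums together with the triangle inequality, the distance decomposes into the error already accumulated in $\overPU{t_k}$, which \propref{prop:eaccU} bounds by $\eaccU{k}$, and the \emph{stretching} error incurred by replacing $\overPU{\tau_k}$ with its endpoint enclosure $\overPU{t_{k+1}}$.

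For the stretching term I would use that the fresh particular contribution $e^{At_k}\overPU{\theta}$ grows monotonically from $\matzeros{}$ at $\theta=0$ to $e^{At_k}\overPU{\Delta t_k}$ at $\theta=\Delta t_k$, so that for every $t\in\tau_k$ the partial set $e^{At_k}\overPU{t-t_k}$ is contained in the full step set $e^{At_k}\overPU{\Delta t_k}$. Consequently any point of $\overPU{t_{k+1}}$ differs from a matching point of $\PU{t}$ only by an element of $e^{At_k}\overPU{\Delta t_k}$; since this set contains the origin, \eqref{eq:dH<=err} bounds the induced distance by $\errOp{e^{At_k}\overPU{\Delta t_k}}=\eUstep{k}$. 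Adding the two contributions and noting that the bound is independent of $t$ yields $\max_{t\in\tau_k}\dH{\PU{t},\overPU{t_{k+1}}}\le\eaccU{k}+\eUstep{k}$.

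I expect the main obstacle to be the book-keeping that reconciles the wrapping-free update, which holds the accumulated part $\overPU{t_k}$ fixed, with the exact solution $\PU{t}=e^{A(t-t_k)}\PU{t_k}\oplus\PU{t-t_k}$, whose accumulated part is additionally propagated by $e^{A(t-t_k)}$. The representative in $\PU{t}$ chosen for each point of $\overPU{t_{k+1}}$ must be selected so that both this propagation and the monotone growth of the fresh step are absorbed into the two stated error terms, and in particular so that the stretching contribution is captured \emph{exactly} by $\errOp{e^{At_k}\overPU{\Delta t_k}}$ rather than a larger quantity.
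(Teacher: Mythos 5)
Your proposal is correct and follows essentially the same route as the paper, whose proof is even terser: it simply observes (via Fig.~\ref{fig:eUstep}) that the worst case over $\tau_k$ occurs at $t=t_k$, where the true additional set is $\matzeros{}$ but $e^{At_k}\overPU{\Delta t_k}$ is used, and bounds $\dH{\matzeros{},e^{At_k}\overPU{\Delta t_k}}$ by $\errOp{e^{At_k}\overPU{\Delta t_k}}$ using \eqref{eq:dH<=err}, adding $\eaccU{k}$ for the previously accumulated part. The book-keeping obstacle you flag at the end dissolves because, for a time-invariant input set, the exact solution also admits the commuted decomposition $\PU{t}=\PU{t_k}\oplus e^{At_k}\PU{t-t_k}$, which aligns the accumulated part directly with the unpropagated $\overPU{t_k}$ of the wrapping-free update.
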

\begin{proof}
Since $e^{At_k} \overPU{\theta}$ grows monotonically with $\theta$, the maximum deviation over the time interval $\tau_k$ occurs at $t = t_k$, where the actual additional set would be $\matzeros{}$, but instead $e^{At_k} \overPU{\Delta t_k}$ is used.
Therefore, the error is
\begin{equation*}
	\dH{\matzeros{}, e^{At_k} \overPU{\Delta t_k}} \overset{\eqref{eq:dH<=err}}{\leq} \errOp{ e^{At_k} \overPU{\Delta t_k} } ,
\end{equation*}
which corresponds to the size of the additional set.
\end{proof}

\vspace{-0.1cm}

\subsubsection{Zonotope Order Reduction}
\label{par:reduce}

The zonotope order reduction of the particular solution $\overPU{t_{k+1}}$ in \lineref{alg:standard:overPU_k} of \algref{alg:standard} induces another error.
To determine this reduction error, we first split the particular solution $e^{At_k} \overPU{\Delta t_k}$ into two parts
\begin{align}
	&e^{At_k} \overPU{\Delta t_k} \overset{\eqref{eq:overPU_init}}{=} e^{At_k} \bigg(
		\bigoplus_{i=0}^{\tayl{}} \tilde{A}_i \, \inputsetzero{} \oplus \E{k} \Delta t_k \, \inputsetzero{} \bigg) \nonumber \\
	&\; = e^{At_k} \underbrace{\Delta t_k \, \inputsetzero{}}_{=:\,\overPUzero{\Delta t_k}}
		\oplus \, e^{At_k} \underbrace{ \bigg( \bigoplus_{i=1}^{\tayl{}} \tilde{A}_i \, \inputsetzero{} \oplus \E{k} \Delta t \, \inputsetzero{} \bigg)}_{=:\,\overPUrest{\Delta t_k}} \label{eq:overPUzerorest}
\end{align}
with $\tilde{A}_i$ defined as in \propref{prop:eaccU}.
We exploit that the error $\eaccUstep{k}$ in \eqref{eq:eaccUstep} is unaffected by using the box outer-approximation $\operatorname{box} (e^{At_k} \overPUrest{\Delta t_k})$ instead of $e^{At_k} \overPUrest{\Delta t_k}$ since the box outer-approximation is also used in the computation of $\eaccUstep{k}$.
Therefore, we can always reduce $\overPUrest{t_{k+1}}$ to a box (which has zonotope order~$1$) as that reduction error is already contained in $\eaccUstep{k}$.
Consequently, we only have to determine the reduction error of $\overPUzero{t_{k+1}}$:

\begin{proposition}[Zonotope order reduction error] \label{prop:zonorder}
The Hausdorff distance between the particular solution $\overPUzero{t_{k+1}}$ computed without any reduction and its iteratively reduced counterpart $\redOp{\overPUzero{t_{k+1}},\zonorder{k}}$ is bounded by
\begin{equation} \label{eq:ered}
	\ered{k+1} = \ered{k} + \eredstep{k} ,
\end{equation}
where the error $\eredstep{k}$ for one time step is bounded by
\begin{align}
\begin{split} \label{eq:eredstep}
	&\dH{ \overPUzero{t_{k+1}}, \redOp{\overPUzero{t_{k+1}},\zonorder{k}} } \\
	&\quad \leq \eredstep{k} := \errOp{\zono{\matzeros{},\Gred{}}} ,
\end{split}
\end{align}
where $\Gred{}$ defined as in \defref{def:reduce} contains the generators selected for reduction.
\end{proposition}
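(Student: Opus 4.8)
The plan is to prove the claim in two stages: first I would bound the error introduced by a \emph{single} application of $\redOp{\cdot}$, obtaining $\eredstep{k} = \errOp{\zono{\matzeros{},\Gred{}}}$, and then accumulate these per-step contributions over all steps via the propagation formula~\eqref{eq:eaccprop}. For the single-step bound, I would separate the zonotope being reduced into its retained and reduced generators. Following \defref{def:reduce} and \eqref{eq:zonoMinSum}, the set before reduction is $\zono{c,\Gkeep{}} \oplus \zono{\matzeros{},\tilde{G}}$, where $\tilde{G} := [G_{(\cdot,\pi_1)}\,\dots\,G_{(\cdot,\pi_\chi)}]$ collects the $\chi$ generators selected for reduction, while its reduced counterpart is $\zono{c,\Gkeep{}} \oplus \zono{\matzeros{},\Gred{}}$. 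Since both share the common summand $\zono{c,\Gkeep{}}$, only the reduced part matters. The key observation is that $\Gred{} = \diag\big(\sum_{i=1}^{\chi}|G_{(\cdot,\pi_i)}|\big)$ is exactly the axis-aligned box enclosure of the reduced generators, i.e. $\zono{\matzeros{},\Gred{}} = \boxOp{\zono{\matzeros{},\tilde{G}}} \supseteq \zono{\matzeros{},\tilde{G}}$; hence the reduced set encloses the original and one direction of the Hausdorff distance vanishes.

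For the opposite direction, I would write an arbitrary point of the reduced set as $\kappa + b$ with $\kappa \in \zono{c,\Gkeep{}}$ and $b \in \zono{\matzeros{},\Gred{}}$. Because $\matzeros{} \in \zono{\matzeros{},\tilde{G}}$, the point $\kappa$ already belongs to the original set, so its distance to the original set is at most $\norm{b}_2 \leq \rad(\zono{\matzeros{},\Gred{}}) = \errOp{\zono{\matzeros{},\Gred{}}}$. Feeding this into the ball characterization~\eqref{eq:dHball} yields the single-step bound $\eredstep{k}$.

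For the accumulation, I would unroll the recursion $\overPUzero{t_{k+1}} = \overPUzero{t_k} \oplus e^{At_k}\overPUzero{\Delta t_k}$ and compare the unreduced iterate with its iteratively reduced counterpart. Applying the triangle inequality for the Hausdorff distance, together with its non-expansiveness under Minkowski addition of a common summand, $\dH{\S{1}\oplus\S{3},\S{2}\oplus\S{3}} \leq \dH{\S{1},\S{2}}$, splits the step-$k$ discrepancy into the previously accumulated error and the fresh single-step reduction error. Because \algref{alg:standard} multiplies $e^{At_k}$ only onto the newly added increment and never re-multiplies the already accumulated set, the past error $\ered{k}$ is neither amplified nor coupled to the new term, so the two simply add and~\eqref{eq:eaccprop} gives $\ered{k+1} = \ered{k} + \eredstep{k}$.

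The step I expect to be the main obstacle is the accumulation rather than the single-step bound: establishing additivity hinges on the wrapping-free structure of the recursion, which ensures each reduction error is committed once and then carried through subsequent Minkowski sums without amplification. By contrast, the single-step bound is essentially routine once $\Gred{}$ is identified as the interval-hull enclosure of the reduced generators and the shared summand $\zono{c,\Gkeep{}}$ is factored out.
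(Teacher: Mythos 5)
Your proposal is correct and follows essentially the same route as the paper's (very terse) proof: bound the single-step error by the box enclosure $\errOp{\zono{\matzeros{},\Gred{}}}$ of the generators selected for reduction, then accumulate over steps via the propagation formula \eqref{eq:eaccprop}. Your additional details -- factoring out the common summand $\zono{c,\Gkeep{}}$, noting $\zono{\matzeros{},\Gred{}} \supseteq \zono{\matzeros{},\tilde{G}}$ so one direction of the Hausdorff distance vanishes, and using non-expansiveness under a shared Minkowski summand for the accumulation -- are all sound elaborations of what the paper leaves implicit.
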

\begin{proof}
The error $\eredstep{k}$ for one time step is given by the box enclosure of the zonotope formed by the generators selected for reduction.
Using the error propagation formula \eqref{eq:eaccprop}, we then obtain the overall error $\ered{k+1}$ in \eqref{eq:ered}.
\end{proof}
%

\begin{figure}[t]
	\centering
	\begin{tikzpicture}[scale=1]

\begin{scope}[xshift=-0.5cm]
\filldraw[fill=good_gray,draw=good_blue,thick]
	(1.25,0.5) -- (0.75,1.25) -- 
	(1.25,3.55) -- (2.6,4.15) -- (3.75,2.95) -- 
	(2,0.75) -- cycle; 
\draw[good_blue]
	(1.25,0.5) -- (0.75,1.25) -- 
	(1.75,3.25) -- (2.5,3.75) -- (3.25,3) -- 
	(2,0.75) -- cycle; 
\draw[good_blue] (0.75,1.25) -- (1.25,2) -- (2.25,1.5) -- (2,0.75);
\draw[good_blue] (1.75,3.25) -- (1.85,2.45) -- (2.65,2.25) -- (3.25,3);
\end{scope}

\begin{scope}[xshift=4.25cm]
\filldraw[fill=good_gray,draw=good_blue,thick]
	(1.15,0.1) -- (0.3,1.3) -- 
	(1.25,3.55) -- (2.6,4.15) -- (3.75,2.95) -- 
	(2.5,0.7) -- cycle; 
\draw[good_blue]
	(1.25,0.5) -- (0.75,1.25) -- 
	(1.75,3.25) -- (2.5,3.75) -- (3.25,3) -- 
	(2,0.75) -- cycle; 
\draw[good_blue] (0.75,1.25) -- (1.25,2) -- (2.25,1.5) -- (2,0.75);
\draw[good_blue] (1.75,3.25) -- (1.85,2.45) -- (2.65,2.25) -- (3.25,3);
\draw[good_blue,dashed] (0.75,1.25) -- (1.25,3.55);
\draw[good_blue,dashed] (2,0.75) -- (3.75,2.95);

\end{scope}

\node[align=center] (Htauk) at (3.65,1.95) {$\overHPtp{\tau_k}$};
\draw[postEdge,thin] (1.65,2.1) -- (Htauk.west);
\draw[postEdge,thin] (5.9,2.2) -- (Htauk.east);

\node[align=center] (Htk) at (3.25,0.75) {$\HPtp{t_k}$};
\draw[postEdge,thin] (1.1,1) -- (Htk.west);
\draw[postEdge,thin] (5.6,1) -- (Htk.east);

\node[align=center] (Htkplus) at (4.2,3.4) {$\HPtp{t_{k+1}}$};
\draw[postEdge,thin] (2.0,3.2) -- (Htkplus.west);
\draw[postEdge,thin] (6.7,3.2) -- (Htkplus.east);

\end{tikzpicture}
	\caption{Reachable set computation using the correct particular time-interval solution $\overPU{\tau_k}$ (left) and an outer-approximation $\overPU{t_{k+1}} \supseteq \overPU{\tau_k}$ (right).}
	\label{fig:eUstep}
	\vspace{-0.2cm}
\end{figure}

\subsubsection{Summary}
\label{par:eps_R}

The derived error terms allow us to compute an upper bound for the outer-approximation error contained in the time-point solution and time-interval solution:
\begin{align}
	&\dH{\Rtp{t_k},\overRtp{t_k}} \leq \eaccU{k} + \ered{k} , \label{eq:eps_Rtp} \\
	\begin{split} \label{eq:eps_Rti}
	&\dH{\Rti{\tau_k},\overRti{\tau_k}} \leq \etotalx{k} \\
	&\quad := \ehom{k} + \eaccU{k} + \eUstep{k} + \ered{k+1} ,
	\end{split}
\end{align}
where we use $\eaccU{k}$ instead of $\eaccU{k+1}$ in \eqref{eq:eps_Rti}, since the difference $\eaccU{k+1} - \eaccU{k} \overset{\eqref{eq:eaccU}}{=} \eaccUstep{k}$ is already included in $\eUstep{k}$.
As usually only time-interval reachable sets are required for formal verification, we will only use the time-interval error $\etotalx{k}$ in our automated parameter tuning algorithm.

\subsection{Automated Tuning Algorithm}
\label{ssec:automated}

Using the error terms derived in the previous subsection, we now present an algorithm that tunes $\Delta t_k$, $\tayl{k}$, and $\zonorder{k}$ automatically such that the Hausdorff distance between the exact reachable set $\Rti{[0,\tFinal{}]}$ and the computed enclosure $\overRti{[0,\tFinal{}]}$ is below the error bound $\emax{}$.
As different types of errors require different strategies for parameter tuning, we divide the derived errors into three categories:
\begin{enumerate}
	\item \textbf{Non-accumulating error} $\enonacc{k}$:
		Since the errors $\ehom{k}$ and $\eUstep{k}$ only affect the current step, we define the non-accumulating error by
		\begin{equation} \label{eq:enonacc}
			\enonacc{k} := \ehom{k} + \eUstep{k} .
		\end{equation}				
	\item \textbf{Accumulating error} $\eacc{k}$:
		The particular solution $\overPU{t_k}$	accumulates over time, yielding
		\begin{equation} \label{eq:eacc_eaccstep}
			\eacc{k} := \eaccU{k}, \qquad \eaccstep{k} := \eaccUstep{k} ,
		\end{equation}
		for the overall accumulating error and the accumulating error for one time step.
	\item \textbf{Reduction error} $\ered{k}$:
		The representation size of the particular solution $\overPU{t_k}$ is iteratively reduced (\lineref{alg:standard:overPU_k}),
		which induces an accumulating error \eqref{eq:ered}.
		Despite its accumulation, we do not add this error to $\eacc{k}$ since it does not directly depend on the time step size $\Delta t_k$.
\end{enumerate}

We have to manage these errors over time so that the resulting set $\overRti{t}$ respects the error bound $\emax{}$ at all times.
Therefore, we partition $\emax{}$ into individual admissible errors $\enonaccadm{k}, \eaccadmstep{k}$, and $\eredadmstep{k}$ for each step, which is visualized in \figref{fig:errors}:

\begin{figure}[t]
	\centering
	\begin{tikzpicture}[scale = 1]

\coordinate (xtickshift) at (0,0.1);
\coordinate (ytickshift) at (0.1,0);
\def\factor{0.15};
\coordinate (tk) at (2.5,0);
\coordinate (tkplus1) at (4.1,0);
\coordinate (tend) at (6,0);
\coordinate (emax) at (0,4);
\coordinate (ered) at (0,1.8);
\coordinate (eredadmtk) at (0,0.75); 
\coordinate (eredadmtkplus1) at (0,1.23); 
\coordinate (eredtk) at ($0.7*(eredadmtk)$);
\coordinate (eaccadmtk) at (0,0.9167); 
\coordinate (eaccadmtkplus1) at (0,1.5033); 
\coordinate (eacctk) at ($0.7*(eaccadmtk)$);
\def\eaccstepofeaccadmstep{0.8};


\draw[axes] (-0.25,0) node[left] {$0$} -- ($(tend) + (1,0)$) node[right] {$t$};
\draw[axes] (0,-0.25) node[below] {$0$} -- ($(emax) + (0,0.5)$) node[above] {$\varepsilon$};
\draw[dotted] ($(emax)$) -- ($(tend) + (emax)$);

\draw[thin] ($(tk)+(xtickshift)$) -- ($(tk)-(xtickshift)$) node[below] {$t_k$};
\draw[thin] ($(tkplus1)+(xtickshift)$) -- ($(tkplus1)-(xtickshift)$) node[below,xshift=0.16cm] {$t_k + \Delta t_k$};
\draw[thin] ($(tend)+(xtickshift)$) -- ($(tend)-(xtickshift)$) node[below] {$\tFinal{}$};
\draw[thin] ($(emax)+(ytickshift)$) -- ($(emax)-(ytickshift)$) node[left] {$\emax{}$};

\begin{scope}[thick,draw=good_blue,text=good_blue]
\draw[thin] (0,0) -- ($(tend) + (ered)$);
\node at ($0.3*(tend) + 0.45*(ered)$) {$\eredadm{t}$};
\draw (tk) -- ($(tk) + (eredtk)$) node[midway,right] {$\ered{k}$};
\draw ($(tk) + (eredtk) - (ytickshift)$) -- ($(tk) + (eredtk) + (ytickshift)$);
\draw ($(tk) - (ytickshift)$) -- ($(tk) + (ytickshift)$);
\draw[dotted,thin] ($(tk) + (eredtk)$) -- ($(tkplus1) + (eredtk)$);
\draw ($(tkplus1) + (eredtk)$) -- ($(tkplus1) + (eredadmtkplus1)$)
	node[midway,right] {$\eredadmstep{k}$};
\draw ($(tkplus1) + (eredadmtkplus1) - (ytickshift)$) -- ($(tkplus1) + (eredadmtkplus1) + (ytickshift)$);
\draw ($(tkplus1) + (eredtk) - (ytickshift)$) -- ($(tkplus1) + (eredtk) + (ytickshift)$);
\draw (tend) -- ($(tend) + (ered) - \factor*(xtickshift)$) node[right,midway] {$\eredadm{\tFinal{}}$};
\draw ($(tend) - (ytickshift)$) -- ($(tend) + (ytickshift)$);
\draw ($(tend) + (ered) - (ytickshift) - \factor*(xtickshift)$)
	-- ($(tend) + (ered) + (ytickshift) - \factor*(xtickshift)$);
\end{scope}

\begin{scope}[thick,draw=good_red,text=good_red]
\draw[thin] (0,0) -- ($(tend) + (emax)$);
\node at ($0.3*(tend) + 0.4*(emax)$) {$\eaccadm{t}$};
\draw ($(tk) + (eredadmtk)$) -- ($(tk) + (eredadmtk) + (eacctk)$)
	node[midway,right,yshift=0.15cm] {$\eacc{k}$};
\draw ($(tk) + (eredadmtk) + (eacctk) - (ytickshift)$) -- ($(tk) + (eredadmtk) + (eacctk) + (ytickshift)$);
\draw ($(tk) + (eredadmtk) - (ytickshift)$) -- ($(tk) + (eredadmtk) + (ytickshift)$);
\draw[dotted,thin] ($(tk) + (eredadmtk) + (eacctk)$) -- ($(tkplus1) + (eredadmtkplus1) + (eacctk)$);
\draw ($(tkplus1) + (eredadmtkplus1) + (eacctk) + 0.5*(ytickshift)$)
	-- ($(tkplus1) + (eredadmtkplus1) + (eaccadmtkplus1) + 0.5*(ytickshift)$)
	node[right,midway] {$\eaccadmstep{k}$};
\draw ($(tkplus1) + (eredadmtkplus1) + (eaccadmtkplus1) - 0.5*(ytickshift)$)
	-- ($(tkplus1) + (eredadmtkplus1) + (eaccadmtkplus1) + 1.5*(ytickshift)$);
\draw ($(tkplus1) + (eredadmtkplus1) + (eacctk) - 0.5*(ytickshift)$)
	-- ($(tkplus1) + (eredadmtkplus1) + (eacctk) + 1.5*(ytickshift)$);

\draw ($(tend) + (ered) + \factor*(xtickshift)$) -- ($(tend) + (emax)$)
	node[right,midway] {$\eaccadm{\tFinal{}}$};
\draw ($(tend) + (emax) - (ytickshift)$) -- ($(tend) + (emax) + (ytickshift)$);
\draw ($(tend) + (ered) - (ytickshift) + \factor*(xtickshift)$)
	-- ($(tend) + (ered) + (ytickshift) + \factor*(xtickshift)$);
\end{scope}

\begin{scope}[thick]
\draw ($(tkplus1) + (eredadmtkplus1) + (eacctk)$) -- ($(tkplus1) + (emax)$)
	node[midway,left] {$\enonaccadm{k}$};
\draw ($(tkplus1) + (emax) - (ytickshift)$) -- ($(tkplus1) + (emax) + (ytickshift)$);
\draw ($(tkplus1) + (eredadmtkplus1) + (eacctk) - (ytickshift)$)
	-- ($(tkplus1) + (eredadmtkplus1) + (eacctk) + (ytickshift)$);
\end{scope}

\end{tikzpicture}
	\caption{The errors $\eacc{k}$ and $\ered{k}$ until $t_k$ and the bounds $\eaccadm{t}$ and $\eredadm{t}$ yield the individual error bounds $\enonaccadm{k}, \eaccadmstep{k}, \eredadmstep{k}$ for the current step~$k$.}
	\label{fig:errors}
\end{figure}

\begin{enumerate}
	\item \textbf{Reduction error bound} $\eredadmstep{k}$:
	We limit the reduction error by a linearly increasing bound
	\begin{equation} \label{eq:eredadm}
		\eredadm{t} = \frac{t}{\tFinal{}} \, \zeta \emax{}, \qquad \zeta \in [0,1) .
	\end{equation}
	Thus, the additional error $\eredstep{k}$ in step~$k$ is bounded by
	\begin{equation} \label{eq:eredadmstep}
		\eredadmstep{k} = \eredadm{t_k + \Delta t_k} - \ered{k} ,
	\end{equation}
	i.e., the difference between the bound at time $t_k + \Delta t_k$ and the accumulated error until $t_k$.
	While our algorithm works for arbitrary values $\zeta$, we present a heuristic for choosing $\zeta$ later in \secref{ssec:tuningmethods}.
	\item \textbf{Accumulating error bound} $\eaccadmstep{k}$:
	Similarly, we limit the accumulating error by another linearly increasing bound
	\begin{equation} \label{eq:eaccadm}
		\eaccadm{t} = \frac{t}{\tFinal{}} \, (1-\zeta) \emax{} ,
	\end{equation}
	so that we have $\eredadm{\tFinal{}} + \eaccadm{\tFinal{}} = \emax{}$.
	Analogously to \eqref{eq:eredadmstep}, the bound for the additional error $\eaccUstep{k}$ is
	\begin{equation} \label{eq:eaccadmstep}
		\eaccadmstep{k} = \eaccadm{t_k + \Delta t_k} - \eacc{k} .
	\end{equation}
	\item \textbf{Non-accumulating error bound} $\enonaccadm{k}$:
	Finally, we obtain the bound for the non-accumulating error $\enonacc{k}$ by subtracting the other two bounds from $\emax{}$:
	\begin{equation} \label{eq:enonaccadm}
		\enonaccadm{k} = \emax{} - \eredadm{t_k + \Delta t_k} - \eacc{k} .
	\end{equation}
	Note that we only subtract $\eacc{k}$ instead of $\eaccadm{t_k + \Delta t_k}$ for the accumulating error
	since the accumulating error $\eaccstep{k} = \eaccUstep{k}$ for the current step is already accounted for by the error $\eUstep{k}$,
	which is according to \eqref{eq:enonacc} part of the non-accumulating error.
	This also guarantees us a non-zero bound for $\enonaccadm{k}$ in the last step even though $\eaccadm{\tFinal{}} + \eredadm{\tFinal{}} = \emax{}$.
\end{enumerate}

The tuning strategies for the parameters are as follows:
\begin{itemize}
	\item \textbf{Time step size} $\Delta t_k$:
		We initialize $\Delta t_k$ by its previous value $\Delta t_{k-1}$,
		or by $\tFinal{}$ as an initial guess for the first step.
		To keep the presentation simple, we iteratively halve this value until the error bounds are satisfied;
		a more sophisticated tuning method is described in \secref{ssec:tuningmethods}.
	\item \textbf{Truncation order $\tayl{k}$}:
		We tune $\tayl{k}$ simultaneously with the computation of $\Fx{k}$ and $\Fu{k}$,
		for which the idea proposed in \cite[Sec.~3.1]{Wetzlinger2021} is reused:
		The partial sums
		{\setlength{\abovedisplayskip}{2pt}
		\begin{equation} \label{eq:T}
			\T{j} = \bigoplus_{i=1}^{j} \mathcal{I}_i \frac{A^i}{i!}
		\end{equation}}%
		in the computation of $\Fx{k}$ in \eqref{eq:Fx} are successively compared until the relative change
		in the Frobenius norm of $\T{j}$ computed according to \cite[Thm.~10]{Farhadsefat2011} is smaller than $10^{-10}$.
		As this bound is relative, we can ensure convergence independently of the scale of the system,
		since the size of the additional terms decreases exponentially for $i \to \infty$.
	\item \textbf{Zonotope order $\zonorder{k}$}:
		We iteratively increase the order $\zonorder{k}$ until the error $\eredstep{k}$
		is smaller than the error bound $\eredadmstep{k}$.
		A more efficient method compared to this naive implementation is to directly integrate the search
		for a suitable order into the zonotope order reduction.
\end{itemize}

\begin{algorithm}[!thb]
\caption{Reachability algorithm (automated tuning)} \label{alg:automated}
\textbf{Require:} Linear system $\dot{x} = Ax + Bu + p$,
	initial set~$\initset{} = \zono{c_x,G_x}$, input set~$\inputset{} = \zono{c_u,G_u}$, time horizon~$\tFinal{}$,
	error bound~$\emax{}$
	
\textbf{Ensure:} Outer-approximation of the reachable set $\overRti{[0,\tFinal{}]}$

\setstretch{1.2}
\begin{algorithmic}[1]
\State $k \gets 0, t_0 \gets 0, \Delta t_{-1} \gets \tFinal{}, \HPtp{t_0} \gets \initset{}$
\State $\Pu{t_0} \hspace{-1pt} \gets \hspace{-1pt} \zono{\mathbf{0},[\;\!]}, \overPUzero{t_0} \hspace{-1pt} \gets \hspace{-1pt} \zono{\mathbf{0},[\;\!]}, \overPUrest{t_0} \hspace{-1pt} \gets \hspace{-1pt} \zono{\mathbf{0},[\;\!]}$
\State $\uTrans{} \gets B c_u + p, \, \inputsetzero{} \gets \zono{\matzeros{},B G_u}$ 
	\label{alg:automated:init}
\While{$t_k < \tFinal{}$} \label{alg:automated:looptstart}
	\State $\Delta t_k \gets 2 \Delta t_{k-1}$
		\label{alg:automated:initDeltat}
	\Repeat \label{alg:automated:loopstart}
		\State $\Delta t_k \gets \frac{1}{2} \Delta t_k$, $t_{k+1} \gets t_k + \Delta t_k$
			\label{alg:automated:shrinkDeltat}
		\State $\tayl{k} \gets 0, \T{\tayl{k}} = \matzeros{}$
		\Repeat \label{alg:automated:tayl}
			\State $\tayl{k} \gets \tayl{k} + 1$
				\label{alg:automated:incrementeta}
			\State $\T{\tayl{k}} \gets \T{\tayl{k}-1} \oplus \mathcal{I}_{\tayl{k}} \frac{A^{\tayl{k}}}{\tayl{k}!}$
				\Comment{see~\eqref{eq:I}, \hspace{-1pt}\eqref{eq:T}}
				\label{alg:autoamted:tempeta}
		\Until{$1 - \norm{\T{\tayl{k}-1}}_F / \norm{\T{\tayl{k}}}_F \leq 10^{-10}$}
			\label{alg:automated:tayl_end}
		\State $\enonacc{k}, \eaccstep{k} \gets$ \eqref{eq:enonacc}, \eqref{eq:eacc_eaccstep}
			\label{alg:automated:allerrs}
		\State $\eaccadmstep{k}, \enonaccadm{k} \gets$ \eqref{eq:eaccadmstep}, \eqref{eq:enonaccadm}
			\label{alg:automated:allbounds}
	\Until{$\eaccstep{k} \leq \eaccadmstep{k} \wedge \enonacc{k} \leq \enonaccadm{k}$}
		\label{alg:automated:errcheck}
	\State $\overPUzero{\Delta t_k}, \overPUrest{\Delta t_k} \gets$ \eqref{eq:overPUzerorest}
		\label{alg:automated:overPU_init}
	\State $\overPUrest{t_{k+1}} \gets \overPUrest{t_k} \oplus \boxOp{ e^{A t_k} \overPUrest{\Delta t_k} }$
		\label{alg:automated:overPUrest_k}
	\State $\overPUzero{t_{k+1}} \gets \overPUzero{t_k} \oplus e^{A t_k} \overPUzero{\Delta t_k}$
		\label{alg:automated:overPUzero_k}
	\State $\eredadmstep{k} \gets $ \eqref{eq:eredadmstep}, $\zonorder{k} \gets 0$
		\label{alg:automated:zonorder_init}
	\Repeat \label{alg:automated:zonorder}
		\State $\zonorder{k} \gets \zonorder{k} + 1$, $\eredstep{k} \gets $ \propref{prop:zonorder}
			\label{alg:automated:incrementzonorder}
	\Until{$\eredstep{k} \leq \eredadmstep{k}$}
		\label{alg:automated:zonorder_end}
	\State $\overPU{t_{k+1}} \gets \redOp{\overPUzero{t_{k+1}},\zonorder{k}} \oplus \overPUrest{t_{k+1}}$
		\label{alg:automated:overPU_k}
	\State $\Pu{\Delta t_k} \gets $ \eqref{eq:Pu_init}
		\label{alg:automated:Pu_init}
	\State $\Pu{t_{k+1}} \gets \Pu{t_k} \oplus e^{At_k} \Pu{\Delta t_k}$
		\label{alg:automated:Pu_k}
	\State $\HPtp{t_{k+1}} \gets e^{A t_{k+1}} \initset{} + \Pu{t_{k+1}}$
		\label{alg:automated:HPtp_k}
	\State $\C{} \hspace{-1.5pt} \gets \hspace{-1.5pt} \Fx{k} \overHPtp{t_k} \hspace{-1pt} \oplus \hspace{-1pt} \Fu{k} \uTrans{}$
		\Comment{see \eqref{eq:Fx}, \hspace{-2pt}\eqref{eq:Fu}}
		\label{alg:automated:tayl_C}
	\State $\overHPti{\tau_k} \gets \linCombOp{\HPtp{t_k}, \HPtp{t_{k+1}}} \oplus \C{}$
		\label{alg:automated:overHPti_k}
	\State $\eacc{k+1},\ered{k+1} \gets$ \eqref{eq:eacc_eaccstep}, \eqref{eq:ered}
		\label{alg:automated:eacc_ered}
	\State $\overRti{\tau_k} \gets \overHPti{\tau_k} \oplus \overPU{t_{k+1}}$
		\label{alg:automated:overRti}
	\State $k \gets k+1$
		\label{alg:automated:incrk}
\EndWhile \label{alg:automated:looptend}
\State \Return $\overRti{[0,\tFinal{}]} \gets \bigcup_{j=0}^{k-1} \overRti{\tau_j}$
	\label{alg:automated:fullR}
\end{algorithmic}
\end{algorithm}

The resulting automated tuning algorithm is shown in \algref{alg:automated}:
In the repeat-until loop (\linesref{alg:automated:loopstart}{alg:automated:errcheck}), we first decrease the time step size $\Delta t_k$ (\lineref{alg:automated:shrinkDeltat}) and tune the truncation order $\tayl{k}$ (\linesref{alg:automated:tayl}{alg:automated:tayl_end}) until the respective error bounds $\eaccadmstep{k}$ and $\enonaccadm{k}$ for the accumulating and non-accumulating errors are safisfied.
After this loop, we compute the particular solution due to the input set $\inputsetzero{}$ and tune the zonotope order $\zonorder{k}$ (\linesref{alg:automated:zonorder}{alg:automated:zonorder_end}) yielding the reduction error $\eredstep{k}$.
Afterwards, we compute the solution to the affine dynamics (\linesref{alg:automated:Pu_k}{alg:automated:overHPti_k}) and finally obtain the reachable set of the current time interval (\lineref{alg:automated:overRti}).

The runtime complexity of \algref{alg:automated} is $\bigO{n^3}$ as for the base algorithm, \algref{alg:standard}.
For an initial set $\initset{}$ with zonotope order $\rho_X$ and an input set with zonotope order $\rho_U$, the space complexity for the $k$-th set $\overRti{\tau_k}$ is bounded by $\bigO{n^2(\rho_X + k\rho_U)}$ and the space complexity for \algref{alg:automated} then follows by summing over all individual steps.

\subsection{Proof of Convergence}
\label{ssec:proof}

While \algref{alg:automated} guarantees to return a reachable set $\overRti{[0,\tFinal{}]}$ satisfying the error bound $\emax{}$ by construction, it remains to show that the algorithm terminates in finite time.
To respect the linearly increasing bound for the accumulating error, we have to show that this error decreases faster than linearly with the time step size $\Delta t_k$;
thus, by successively halving the time step size, we will always find a time step size so that the error bound is satisfied.
Using \lmmsref{lmm:E_lim}{lmm:eUstep_lim} from Appendix~B, we now formulate our main theorem:
\begin{theorem}[Convergence] \label{thm:automated}
\algref{alg:automated} terminates in finite time for arbitrary error bounds $\emax{} > 0$.
\end{theorem}
\begin{proof}
By \lmmref{lmm:eaccUstep_lim}, the additional accumulating error $\eaccstep{k}$ decreases quadratically with $\Delta t_k$.
Thus, we are guaranteed to find a time step size that satisfies the linearly decreasing bound $\eaccadmstep{k}$ by successively halving $\Delta t_k$.
The non-accumulating error $\enonacc{k}$ decreases at least linearly with $\Delta t_k$ according to \lmmsref{lmm:ehom_lim}{lmm:eUstep_lim}.
Since the error bound $\enonaccadm{k}$ approaches a constant value greater than $0$ for $\Delta t_k \to 0$, we are therefore always able to safisfy $\enonaccadm{k}$ by reducing the time step size.
The additional reduction error $\eredstep{k}$ can be set to 0 by simply omitting the reduction, which trivially satisfies any bound $\eredadmstep{k}$.
\end{proof}

\noindent Our adaptive algorithm \algref{alg:automated} must be based on a wrapping-free reachability algorithm to guarantee convergence as successive propagation with $e^{A \Delta t_k}$ would eliminate the required faster-than-linear decrease of the accumulating error.

\subsection{Improved Tuning Methods}
\label{ssec:tuningmethods}

While \algref{alg:automated} is guaranteed to converge, there is still room for improvement regarding the computation time.
Hence, we present enhanced methods for adapting the time step size $\Delta t$ and the choice of $\zeta$ in \eqref{eq:eredadm} determining the amount of error that is allocated for reduction.

\subsubsection{Time Step Size}
\label{par:deltat}

Ideally, the chosen time step size $\Delta t_k$ fulfills the resulting error bounds as tightly as possible.
To this end, we replace the naive adaptation of $\Delta t_k$ in \lineref{alg:automated:shrinkDeltat} of \algref{alg:automated} by regression:
We use the previously obtained error values as data points to define linear and quadratic approximation functions modeling the behavior of the error over $\Delta t_k$, depending on the asymptotic behavior of the respective errors according to \lmmsref{lmm:E_lim}{lmm:eUstep_lim} from Appendix~B.
We then compute an estimate of the time step size required to satisfy the error bounds based on the approximation functions.
This estimate is then refined until the error bounds are satisfied.

\subsubsection{Reduction Error Allocation}
\label{par:zeta}

The second major improvement is to pre-compute a near-optimal value for the parameter $\zeta$ in \eqref{eq:eredadm} using a heuristic that aims to minimize the zonotope order of the resulting reachable sets.
Our heuristic is based on the following observation:
For increasing values of $\zeta$, more margin is allocated to the reduction error and less margin to the accumulating and non-accumulating errors.
Thus, the total number of steps increases because the algorithm has to select smaller time step sizes, yielding a higher zonotope order.
At the same time, the zonotope order can be lowered more due to the larger reduction error margin.
We now want to determine the optimal value of $\zeta$ balancing these two effects.

We first estimate the zonotope order of $\overPU{\tFinal{}}$ using the number of time steps if reduction is completely omitted.
Let us denote the total number of steps for \algref{alg:automated} without reduction ($\zeta = 0$) by $\laststepzero{}$, and the zonotope order of the input set $\inputsetzero{}$ by $\zonorderU{}$.
In each step, the set $e^{At_k} \overPUzero{\Delta t_k} = e^{At_k} \Delta t_k \, \inputsetzero{}$ is added to $\overPU{t_k}$, which iteratively increases the zonotope of $\overPU{t_k}$ by $\zonorderU{}$.
Due to the linear decrease of the total error (see \lmmsref{lmm:eaccUstep_lim}{lmm:eUstep_lim} in Appendix~B), using the value $\zeta = 0.5$ at most doubles the number of steps compared to $\zeta = 0$.
Therefore, the zonotope order of $\overPU{\tFinal{}} = \overPUzero{\tFinal{}} \oplus \overPUrest{\tFinal{}}$ can be estimated as
\begin{equation} \label{eq:rhoplus}
	\zonorder{}^+(\zeta) = \frac{1}{1 - \zeta} \laststepzero{} \zonorderU{} + 1 ,
\end{equation}
if no order reduction takes place, where the summands represent the orders of $\overPUzero{\tFinal{}}$ and $\overPUrest{\tFinal{}}$, respectively.

Next, we estimate the zonotope order of $\overPU{\tFinal{}}$ for a non-zero value $\zeta > 0$ yielding a non-zero error margin $\eredadm{t} > 0$ that is used to reduce the order of $\overPU{\tFinal{}}$.
Using a fixed time step size $\Delta t$ yielding an integer number $\laststep{} = \frac{\tFinal{}}{\Delta t}$ of time steps, we compute the sequence
\begin{equation*}
	\forall j \in \{1,...,\laststep{}+1\}:
	\eredapx{j} = \errOp{ e^{A (j-1) \Delta t} \Delta t \, \inputsetzero{} } ,
\end{equation*}
which estimates the maximum reduction error in each time step.
Let us introduce the ordering $\pi$ which permutes $\{1,...,\laststep{}+1\}$ such that $\eredapx{\pi_1} < ... < \eredapx{\pi_{\laststep{}+1}}$.
To mimic the accumulation of the reduction error, we introduce the cumulative sum over all $\eredapx{j}$ ordered by $\pi$:
{\setlength{\abovedisplayskip}{2pt}
\begin{equation*}
	\forall j \in \{1,...,\laststep{}+1\}:
	\sigma_j = \sum_{i=1}^{j+1} \eredapx{\pi_i} .
\end{equation*}}%
The maximum reducible order $\zonorder{}^-(\zeta)$ exploits the reduction error bound $\eredadm{\tFinal{}} = \zeta \emax{}$ as much as possible:
\begin{align}
\begin{split} \label{eq:rhominus}
	&\zonorder{}^-(\zeta) = j^*  \zonorderU{}, \\
	&\text{where} \; j^* = \argmax_{j \in \{1,...,\laststep{}+1\}} \sigma_j \leq \zeta \emax{} .
\end{split}
\end{align}
Finally, we combine the two parts \eqref{eq:rhoplus} and \eqref{eq:rhominus} describing the counteracting influences to obtain the following heuristic:
\begin{equation} \label{eq:zetar}
	\zeta = \argmin_{\zeta \in [0,1)} \zonorder{}^+(\zeta) - \zonorder{}^-(\zeta) .
\end{equation}
Since this is a scalar optimization problem, we use a fine grid of different values for $\zeta \in [0,1)$ to estimate the optimal value.

\vspace{-0.2cm}
\subsection{Extension to Output Sets}
\label{ssec:outputset}

We now show how to extend the proposed algorithm to outputs $y(t)$.
The output set $\Ytp{t}$ can be computed by evaluating the output equation \eqref{eq:output} in a set-based manner:
\begin{equation} \label{eq:Y}
	\Ytp{t} = C \Rtp{t} \oplus W \mathcal{V} + q .
\end{equation}
For the outer-approximation error of the output set, we have to account for the linear transformation with the matrix~$C$:
\begin{proposition} \label{prop:eps_Y}
Consider a linear system of the form \eqref{eq:linsys}-\eqref{eq:output}.
Given the error $\etotalx{k}$ \eqref{eq:eps_Rti} of the reachable set $\overRtp{\tau_k}$, the corresponding output set $\overYtp{\tau_k}$ has an error of
\begin{equation} \label{eq:eps_R2Y}
	\dH{\Ytp{\tau_k},\overYtp{\tau_k}} \leq \etotaly{k} := \etotalx{k} \norm{C}_2 .
\end{equation}
\end{proposition}
\begin{proof}
For the error in the state $x(t)$, we have
\begin{equation*}
	\dH{\Rtp{\tau_k},\overRtp{\tau_k}} \leq \etotalx{k} \overset{\eqref{eq:dHball}}{\Rightarrow} \overRtp{\tau_k} \subseteq \Rtp{\tau_k} \oplus \B{\varepsilon} ,
\end{equation*}
where the hyperball $\B{\varepsilon}$ has radius $\varepsilon = \etotalx{k}$.
Applying the output equation \eqref{eq:output} to the right-hand side yields
\begin{alignat*}{3}
	&
	&&C \overRtp{\tau_k} \oplus W \mathcal{V} + q		&&\subseteq C \Rtp{\tau_k} \oplus C \B{\varepsilon} \oplus W \mathcal{V} + q \\
	&\overset{\eqref{eq:Y}}{\Leftrightarrow} \;\;
	&&\qquad \qquad \quad \;\; \overYtp{\tau_k}			&&\subseteq \Ytp{\tau_k} \oplus C \B{\varepsilon} .
\end{alignat*}
The error in $\overYtp{\tau_k}$ is therefore given by the radius of the smallest sphere enclosing the set $C \B{\varepsilon}$, i.e.,
\begin{align*}
	\rad(C \B{\varepsilon})
	&= \rad \big(\big\{ C z ~ \big| ~ z^\top z \leq \varepsilon \big\}\big) \\
	&= \max_{\smallnorm{z}_2 \leq \varepsilon} \norm{C z}_2
	= \varepsilon \max_{\smallnorm{z}_2 \leq 1} \norm{C z}_2 = \varepsilon \norm{C}_2 ,
\end{align*}
where $\norm{C}_2$ is the largest singular value of $C$.
\end{proof}


\section{Inner-Approximations}
\label{sec:innerApprox}

As shown in \secref{sec:tuning}, the outer-approximation $\overRtp{t}$ computed by \algref{alg:automated} has a Hausdorff distance of at most $\emax{}$ to the exact reachable set $\Rtp{t}$.
Consequently, an inner-approximation $\underRtp{t} \subseteq \Rtp{t}$ can be computed by the Minkowski difference $\underRtp{t} = \overRtp{t} \ominus \B{\varepsilon}$ of the outer-approximation and the hyperball $\B{\varepsilon}$ with radius $\varepsilon = \emax{}$.
Note that one can also replace $\emax{}$ by the computed error from \algref{alg:automated} to obtain a tighter inner-approximation.
Unfortunately, there exists no closed formula for the Minkowski difference of a zonotope and a hyperball.
Therefore, we first enclose $\B{\varepsilon}$ with a polytope $\P{} \supseteq \B{\varepsilon}$ since the Minkowski difference of a zonotope and a polytope can be computed efficiently if the resulting set is represented by a constrained zonotope:
\begin{proposition}[Minkowski difference] \label{prop:minkDiff}
Given a zonotope $\Z{} = \zono{c,G} \subset \R{n}$ and a polytope $\P{} = \polyV{[v_1~\dots~v_{\polyVert}]} \subset \R{n}$, their Minkowski difference can be represented by the constrained zonotope
{\setlength{\belowdisplayskip}{2pt}
\begin{equation*}
	\Z{} \ominus \P{} = \conZono{c - v_1, [G~\matzeros{}],A,b},
\end{equation*}}%
where
{\setlength{\abovedisplayskip}{2pt}
\begin{equation*}
\begin{split}
& A = \begin{bmatrix} G & -G & \dots & \matzeros{} \\ \vdots & \vdots & \ddots & \vdots \\ G & \matzeros{} & \dots & -G \end{bmatrix},
~~ b = \begin{bmatrix} v_1 - v_2 \\ \vdots \\ v_1 - v_\polyVert \end{bmatrix}.
\end{split}
\end{equation*}}%
\end{proposition}
\begin{proof}
According to \cite[Lemma~1]{Althoff2015d}, the Minkowski difference with a polytope as minuend can be computed as
\begin{equation*}
	\begin{split}
		\Z{} \ominus \P{} &= \hspace{-10pt} \bigcap_{i \in \{1,\dots,\polyVert \}} \hspace{-10pt} (\Z{} - v_i) = (\Z{} - v_1) \cap \dotsc \cap (\Z{} - v_\polyVert).
	\end{split}
\end{equation*}
Using the equation for the intersection of constrained zonotopes in \cite[Eq.~(13)]{Scott2016}, we obtain for the first intersection
\begin{align*}
	\begin{split}
		&(\Z{} - v_1) \cap (\Z{} - v_2) \\
		&\hspace{16pt} = \conZono{ c - v_1,G,[~],[~] } \cap \conZono{ c - v_2,G,[~],[~] } \\
		&\overset{\text{\cite[Eq.~(13)]{Scott2016}}}{=} \conZono{ c - v_1,G,[G~-\!G], v_1 - v_2 }.
	\end{split}
\end{align*}
Repeated application of \cite[Eq.~(13)]{Scott2016} yields the claim.
\end{proof}

For general polytopes the number of vertices increases exponentially with the system dimension.
To keep the computational complexity small, we enclose the hyperball $\B{\varepsilon}$ by a cross-polytope $\polyV{ \varepsilon \, \sqrt{n} \, [-I_n~ I_n]} \supseteq \B{\varepsilon}$, which is a special type of polytope with only $2n$ vertices.
Since the Hausdorff distance between the hyperball and the enclosing cross-polytope is $(\sqrt{n}-1) \varepsilon$, we scale the error bound $\emax{}$ by the factor $1/\sqrt{n}$ before executing \algref{alg:automated} in order to obtain an inner-approximation with a maximum Hausdorff distance of $\emax{}$ to the exact reachable set.
%


\section{Automated Verification}
\label{sec:verification}

One core application of reachability analysis is the verification of safety specifications.
Based on our automated parameter tuning approach, we introduce a fully automated verification algorithm for linear systems, which iteratively refines the tightness of the reachable set inner-approximation and outer-approximation until a given specification can be verified or falsified. 
We consider specifications of the form
\begin{equation*}
	\forall t \in [0,\tFinal{}]:~ \bigg( \bigwedge_{i=1}^{\nrSafeSets{}} \mathcal{R}(t) \subseteq \safeSet{i} \bigg) \wedge
	\bigg( \bigwedge_{i=1}^{\nrUnsafeSets{}} \mathcal{R}(t) \cap \unsafeSet{i} = \emptyset \bigg)
\end{equation*}
defined by a list of safe sets $\{\safeSet{1},\dots,\safeSet{\nrSafeSets{}}\} \subset \R{n}$ and a list of unsafe sets $\{\unsafeSet{1},\dots,\unsafeSet{\nrUnsafeSets{}}\} \subset \R{n}$, both specified as polytopes in halfspace representation.
While we omit the dependence on time here for simplicity, the extension to time-varying safe sets and unsafe sets is straightforward.
To check if the reachable set satisfies the specification, we need to perform containment and intersection checks on zonotopes and constrained zonotopes:

\begin{proposition}[Containment check] \label{prop:containment}
Given a polytope $\P{} = \poly{C,d} \subset \R{n}$ and a constrained zonotope $\CZ{} = \conZono{c,G,A,b} \subset \R{n}$, we have
{\setlength{\belowdisplayskip}{2pt}
\begin{equation} \label{eq:containmentConZono}
	\CZ{} \subseteq \P{} \;
	\Leftrightarrow \;
	\underbrace{\max \big(\distance{}_1,\dots,\distance{}_\polyCons{} \big) }_{\distance{}} \leq 0,
\end{equation}}%
where each linear program
\begin{align*}
	\forall i \in \{1,\dots,\polyCons{} \}: ~~\distance{}_i = &\max_{\alpha \in \R{\gens}} ~ C_{(i,\cdot)} c + C_{(i,\cdot)} G \alpha -d_{(i)} \\ 		
		&\; \text{s.t.} ~~ \alpha \in [-\vecones,\vecones],~ A \, \alpha = b.
\end{align*} 
computes the distance to a single polytope halfspace.
\end{proposition}
\begin{proof}
In general, a set $\S{} \subset \R{n}$ is contained in a polytope if it is contained in all polytope halfspaces.
The linear program above evaluates the support function (see \cite[Def.~1]{LeGuernic2010}) of $\S{}$ along the normal vector of each halfspace, which has to be smaller or equal to the corresponding offset to prove containment \cite[Corollary~13.1.1]{Rockafellar1972}.
\end{proof}
\noindent For a zonotopic in-body $\Z{} = \zono{c,G} \subset \R{n}$, there is a closed-form solution (\cite[Corollary~13.1.1]{Rockafellar1972} with \cite[Prop.~1]{LeGuernic2010}):
{\setlength{\abovedisplayskip}{3pt}
\begin{equation} \label{eq:containmentZono}
	\Z{} \subseteq \P{} \;
	\Leftrightarrow \;
	\underbrace{\max \bigg(Cc-d + \sum_{i=1}^\gens{} |C G_{(\cdot,i)}| \bigg)}_{\distance{}} \leq 0 .
\end{equation}}%
Next, we consider intersection checks:
\begin{proposition}[Intersection check] \label{prop:isIntersecting}
A polytope $\P{} = \poly{C,d} \subset \R{n}$ and a constrained zonotope $\CZ{} = \conZono{c,G,A,b} \subset \R{n}$
intersect if $\distance{} \leq 0$ computed by the linear program
\begin{align*}
	&\distance{} = \min_{x \in \R{n}, \, \alpha \in \R{\gens{}}, \, \delta \in \R{}} \delta \\
	&\hspace{41pt} \text{s.t.} ~~ \forall i \in \{1,...,a\}: C_{(i,\cdot)} x - d_{(i)} \leq \delta , \\
	&\hspace{41pt} x = c + G \alpha, ~ A \alpha = b , ~ \alpha \in [-\vecones,\vecones] .
\end{align*}
\end{proposition}
\begin{proof}
If $\forall i \in \{1,...,\polyCons{}\}: C_{(i,\cdot)}x-d_{(i)} \leq \delta \leq 0$, then there exists a point $x \in \CZ{}$ that is also contained in $\P{}$.
\end{proof}

\begin{algorithm}[!tb]
\caption{Automated verification} \label{alg:verify}
\textbf{Require:} Linear system $\dot{x} = Ax + Bu + p$,
	initial set~$\initset{} = \zono{c_x,G_x}$, input set~$\inputset{} = \zono{c_u,G_u}$,
	time horizon $\tFinal{}$,
	specification defined by a list of safe sets $\safeSet{1},\dots,\safeSet{\nrSafeSets{}}$
	and a list of unsafe sets $\unsafeSet{1},\dots,\unsafeSet{\nrUnsafeSets{}}$
	
\textbf{Ensure:} Specification satisfied (true) or violated (false)

\setstretch{1.1}
\begin{algorithmic}[1]
	\State $\emax{} \gets \text{estimated from simulations}$ \label{alg:verifyInitGuess}
	\Repeat \label{alg:verifyBeginRepeat}
		\State $\overRtp{t} \gets $ comp. with \algref{alg:automated} using error bound $\emax{}$ \label{alg:verifyBeginReach}
		\State $\underRtp{t} \gets \overRtp{t} \ominus \B{\varepsilon}$
			\label{alg:verifyEndReach} \Comment{see \secref{sec:innerApprox}}
		\State $\widehat{\distance}_G,\widecheck{\distance}_G \gets -\infty$, $\widehat{\distance}_F, \widecheck{\distance}_F \gets \infty$
		\For{$j \gets 1$ to $\nrSafeSets{}$} \label{alg:verifyBeginCheck}
			\State $\distance{} \gets \text{distance from } \overRtp{t} \subseteq \safeSet{j}$
				\Comment{see \eqref{eq:containmentZono}}
			\State $\widehat{\distance{}}_G \gets \max(\widehat{\distance{}}_G,\distance{})$
			\State $\distance{} \gets \text{distance from } \underRtp{t} \subseteq \safeSet{j}$
				\Comment{see \eqref{eq:containmentConZono}}
				\label{alg:verify:containmentConZono}
			\State $\widecheck{\distance{}}_G \gets \max(\widecheck{\distance{}}_G,\distance{})$
		\EndFor 
		\For{$j \gets 1$ to $\nrUnsafeSets{}$}
			\State $\distance{} \gets \text{distance from } \overRtp{t} \cap \unsafeSet{j} = \emptyset$
				\Comment{see \propref{prop:isIntersecting}}
				\label{alg:verify:isIntersectingZono}
			\State $\widehat{\distance{}}_F \gets \min(\widehat{\distance{}}_F,\distance{})$
			\State $\distance{} \gets \text{distance from } \underRtp{t} \cap \unsafeSet{j} = \emptyset$
				\Comment{see \propref{prop:isIntersecting}}
				\label{alg:verify:isIntersectingConZono}
			\State $\widecheck{\distance{}}_F \gets \min(\widecheck{\distance{}}_F,\distance{})$
		\EndFor	\label{alg:verifyEndCheck}
		\State $\distance{} \gets \min (-\widecheck{\distance{}}_G,\widecheck{\distance{}}_F)$ \label{alg:verifyBeginMinDist}
		\If{$\widehat{\distance{}}_G \geq 0$}
			\State $\distance{} \gets \min (\distance{},\widehat{\distance{}}_G)$
		\EndIf
		\If{$\widehat{\distance{}}_F \leq 0$}
			\State $\distance{} \gets \min (\distance{},-\widehat{\distance{}}_F)$
		\EndIf \label{alg:verifyEndMinDist}
		\State $\emax{} \gets \max\big(0.1 \, \emax{}, \min(\distance{},0.9 \, \emax{})\big)$ \label{alg:verifyUpdate}
	\Until{$\big( (\widehat{\distance{}}_G \leq 0) \wedge (\widehat{\distance{}}_F > 0) \big) \vee (\widecheck{\distance{}}_G > 0) \vee (\widecheck{\distance{}}_F \leq 0)$} \label{alg:verifyEndRepeat}
	\State \Return $(\widehat{\distance{}}_G \leq 0) \wedge (\widehat{\distance{}}_F > 0)$
\end{algorithmic}
\end{algorithm}

Since a zonotope is just a special case of a constrained zonotope, \propref{prop:isIntersecting} can also be used to check if a zonotope intersects a polytope.
For both \propref{prop:containment} and \propref{prop:isIntersecting}, $\distance{}$ is a good estimate for the Hausdorff distance between the sets if polytopes with normalized halfspace normal vectors are used.
We utilize this in our automated verification algorithm to estimate the accuracy that is required to verify or falsify the specification.
%
%
%
%

The overall verification algorithm is summarized in \algref{alg:verify}:
We first obtain an initial guess for the error bound $\emax{}$ in \lineref{alg:verifyInitGuess} by simulating trajectories for a finite set of points from $\initset{}$.
The repeat-until loop (\linesref{alg:verifyBeginRepeat}{alg:verifyEndRepeat}) then refines the inner- and outer-approximations of the reachable set by iteratively decreasing the error bound $\emax{}$ until the specifications can be verified or falsified.
In particular, we first compute the outer- and inner-approximation (\linesref{alg:verifyBeginReach}{alg:verifyEndReach}).
Next, we perform the containment and intersection checks with the safe and unsafe sets (\linesref{alg:verifyBeginCheck}{alg:verifyEndCheck}) and store the corresponding distances $\widehat{\distance}_G,\widecheck{\distance}_G,\widehat{\distance}_F,\widecheck{\distance}_F$.
Using these distances, we determine the minimum distance (\linesref{alg:verifyBeginMinDist}{alg:verifyEndMinDist}), which is then used to update the error bound $\emax{}$ (\lineref{alg:verifyUpdate}).
Since $\distance{}$ is only an estimate, we also restrict the updated error bound to the interval $[0.1\, \emax{}, 0.9\,\emax{}]$ to guarantee convergence and avoid values that are too small.
Finally, the specifications are satisfied if the outer-approximation of the reachable set is contained in all safe sets ($\widehat{\distance{}}_G \leq 0$) and does not intersect any unsafe sets ($\widehat{\distance{}}_F > 0$).
On the other hand, the specifications are falsified if the inner-approximation of the reachable set is not contained in all safe sets ($\widecheck{\distance{}}_G > 0$) or intersects an unsafe set ($\widecheck{\distance{}}_F \leq 0$).
Improvements for \algref{alg:verify} which we omitted here for simplicity include using the computed error from \algref{alg:automated} instead of the error bound $\emax{}$, omitting re-computation as well as containment and intersection checks for time intervals that are already verified, and only computing inner-approximations if the corresponding outer-approximation is not yet verified.  

The space complexity of \algref{alg:verify} is dominated by the inner-approximation $\underRti{\tau_k}$, which is $\bigO{n^4}$ following \propref{prop:minkDiff}.
Assuming a conservative bound of $\bigO{p^{3.5}}$ for a linear program with $p$ variables according to \cite{Karmarkar1984}, the runtime complexity of \algref{alg:verify} is $\bigO{n^{7}}$ since we evaluate linear programs in Props.~\ref{prop:containment}-\ref{prop:isIntersecting} with $\bigO{n^2}$ variables, respectively.


\section{Numerical Examples}
\label{sec:numex}

Let us now demonstrate the performance of our adaptive tuning approach and our verification algorithm.
We integrated both algorithms into the MATLAB toolbox CORA \cite{Althoff2015a}, and they will be made publicly available with the 2023 release\footnote{available at \url{https://cora.in.tum.de}}.
All computations are carried out on a 2.59GHz quad-core i7 processor with 32GB memory.

\subsection{Electrical Circuit}
\label{ssec:circuit}


To showcase the general concept of our approach, we first consider the deliberately simple example of an electric circuit consisting of a resistance $R = 2\si{\ohm}$, a capacitor with capacity $C = 1.5\si{\milli \farad}$, and a coil with inductance $L = 2.5\si{\milli \henry}$:
\begin{equation*}
	\begin{bmatrix} \dot u_C(t) \\ \dot i_L(t)\end{bmatrix}
	= \begin{bmatrix} -\frac{1}{RC} & \frac{1}{C} \\ -\frac{1}{L} & 0 \end{bmatrix} \begin{bmatrix} u_C(t) \\ i_L(t) \end{bmatrix} + \begin{bmatrix} 0 \\ \frac{1}{L} \end{bmatrix} u_I(t),
\end{equation*}
where the state is defined by the voltage at the capacitor $u_C(t)$ and the current at the coil $i_L(t)$.
The initial set is $\initset{} = [1,3]\si{\volt} \times [3,5]\si{\ampere}$, the input voltage to the circuit $u_I(t)$ is uncertain within the set $\inputset{} = [-0.1,0.1]\si{\volt}$, and the time horizon is $\tFinal{} = 2\si{\second}$.
As shown in \figref{fig:reachSetElectricCircuit}, the inner- and outer-approximations computed using \algref{alg:automated} and \secref{sec:innerApprox} converge to the exact reachable set with decreasing error bounds.
The computation times are $0.26\si{\second}$ for $\emax{} = 0.04$, $0.41\si{\second}$ for $\emax{} = 0.02$, and $0.55\si{\second}$ for $\emax{} = 0.01$.

\begin{figure}[!tb]
  \centering
  \includegraphics[width=0.99\columnwidth]{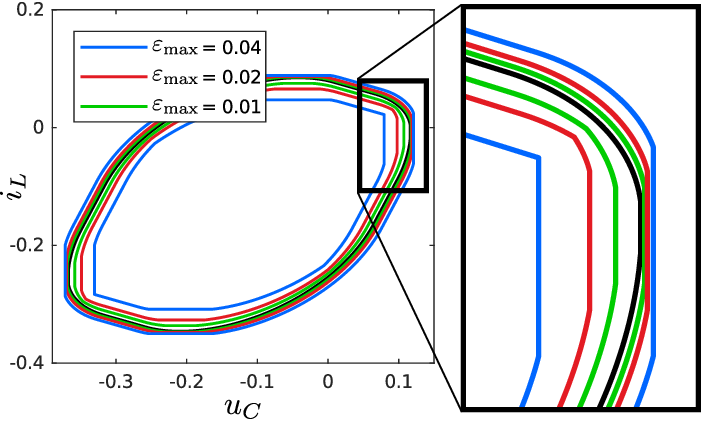}
  \caption{Inner- and outer-approximations of the final reachable set $\Rti{\tFinal{}}$ for the electric circuit using different error bounds $\emax{}$, with the exact reachable set is shown in black.}
  \label{fig:reachSetElectricCircuit}
\end{figure}

\subsection{ARCH Benchmarks}
\label{ssec:ARCH}

Next, we evaluate our verification algorithm on benchmarks from the 2021 ARCH competition \cite{ARCH2021linear}, where state-of-the-art reachability tools compete with one another to solve challenging verification tasks.
We consider all linear continuous-time systems, which are the building benchmark (BLD) describing the movement of an eight-story hospital building, the International Space Station (ISS) benchmark modeling a service module of the ISS, the Heat 3D benchmark (HEAT) representing a spatially discretized version of the heat equation, and the clamped beam benchmark (CB) monitoring oscillations of a beam.
The results in \tabref{tab:benchmarks} demonstrate that our fully automated verification algorithm correctly verifies all safe benchmarks without being significantly slower than state-of-the-art tools that require extensive parameter tuning by experts.
Please note that we compare only to the computation time of other tools achieved by the optimal run with expert-tuned algorithm parameters, disregarding the significant amount of time required for tuning.
Moreover, our algorithm also successfully falsifies the two unsafe benchmarks, where our computation time is slightly worse because the other tools do not explicitly falsify these benchmarks but only test if they cannot be verified, which is considerably easier.

\begin{table*}[!tbh]
\begin{center}
\caption{Comparison of computation times on the ARCH benchmarks, where $n$ is the system dimension, $m$ is the number of inputs, and $\ell$ is the output dimension.
For our approach we additionally specify the number of refinement iterations of \algref{alg:verify}.
The computation times of the other tools are taken from \cite{ARCH2021linear}.}
\label{tab:benchmarks}
\begin{tabular}{l c c c c  c  c c  c  c c c c}
\toprule
\multicolumn{5}{c}{\textbf{Benchmark}} & \multicolumn{3}{c}{$~~~$Our approach} & & \multicolumn{4}{c}{Time comparison} \\
Identifier & $n$ & $m$ & $\ell$ & Safe? & & Time & Iterations & & CORA & HyDRA & JuliaReach & SpaceEx \\ \midrule
HEAT01 & 125 & 0 & 1 & \cmark & & $2.2\si{\second}$ & 2 & & $2.2\si{\second}$ & $13.2\si{\second}$ & $0.13\si{\second}$ & $4.2\si{\second}$ \\
HEAT02 & 1000 & 0 & 1 & \cmark & & $59\si{\second}$ & 1 & & $9.3\si{\second}$ & $160\si{\second}$ & $32\si{\second}$ & --- \\ \midrule
CBC01 & 201 & 0 & 1 & \cmark & & $28\si{\second}$ & 1 & & $7.1\si{\second}$ & --- & $1.4\si{\second}$ & $312.78\si{\second}$ \\
CBF01 & 200 & 1 & 1 & \cmark & & $144\si{\second}$ & 2 & & $30\si{\second}$ & --- & $12\si{\second}$ & $318.88\si{\second}$ \\ \midrule
BLDC01-BDS01 & 49 & 0 & 1 & \cmark & & $1.7\si{\second}$ & 1 & & $2.9\si{\second}$ & $0.426\si{\second}$ & $0.0096\si{\second}$ & $1.6\si{\second}$ \\
BLDF01-BDS01 & 48 & 1 & 1 & \cmark & & $2.1\si{\second}$ & 1 & & $3.3\si{\second}$ & --- & $0.012\si{\second}$ & $1.8\si{\second}$ \\ \midrule
ISSC01-ISS02 & 273 & 0 & 3 & \cmark & & $4.3\si{\second}$ & 1 & & $1.3\si{\second}$ & --- & $1.4\si{\second}$ & $29\si{\second}$ \\
ISSC01-ISU02 & 273 & 0 & 3 & \xmark & & $10\si{\second}$ & 4 & & $0.072\si{\second}$ & --- & $1.4\si{\second}$ & $29\si{\second}$ \\
ISSF01-ISS01 & 270 & 3 & 3 & \cmark & & $75\si{\second}$ & 2 & & $59\si{\second}$ & --- & $10\si{\second}$ & $49\si{\second}$ \\
ISSF01-ISU01 & 270 & 3 & 3 & \xmark & & $191\si{\second}$ & 3 & & $38\si{\second}$ & --- & $10\si{\second}$ & $48\si{\second}$ \\
\bottomrule
\end{tabular}
\end{center}
\vspace{-15pt}
\end{table*}

\subsection{Autonomous Car}
\label{sec:autonCar}

Finally, we show that our verification algorithm can handle complex verification tasks featuring time-varying specifications.
To this end, we consider the benchmark proposed in \cite{Kochdumper2021b}, where the task is to verify that a planned reference trajectory $x_{\text{ref}}(t)$ tracked by a feedback controller is robustly safe despite disturbances and measurement errors.
The nonlinear vehicle model in \cite[Eq.~(3)]{Kochdumper2021b} is replaced by a linear point mass model, which yields the closed-loop system
\begin{equation*}
	\begin{bmatrix} \dot x(t) \\ \dot x_{\text{ref}}(t) \end{bmatrix}
	= \begin{bmatrix} A + BK & \hspace{-5pt}-BK \\ \matzeros & \hspace{-5pt}A \end{bmatrix} \begin{bmatrix} x(t) \\ x_{\text{ref}}(t) \end{bmatrix}
	+ \begin{bmatrix} B & \hspace{-5pt}B & \hspace{-5pt}BK \\ B & \hspace{-5pt}\matzeros & \hspace{-5pt}\matzeros \end{bmatrix} u(t)
\end{equation*}
with $A = [\matzeros{} ~ [I_2~\matzeros{}]^\top]$, $B = [\matzeros{}~I_2]^\top$, and feedback matrix $K \in \R{2 \times 4}$. The initial set is $\initset{} = (x_0 + \mathcal{V}) \times x_0$ and the set of uncertain inputs is $\inputset{} = u_{\text{ref}}(t) \times \mathcal{W} \times \mathcal{V}$, where $\mathcal{W} \subset \R{2}$ and $\mathcal{V} \subset \R{4}$ are the sets of disturbances and measurement errors taken from \cite[Sec.~3]{Kochdumper2021b}, and the initial state $x_0 \in \R{4}$ and control inputs for the reference trajectory $u_{\text{ref}}(t) \in \R{2}$ are specific to the considered traffic scenario.
To compute occupied space of the car, we apply affine arithmetic \cite{deFigueiredo2004} to evaluate the nonlinear map in \cite[Eq.~(4)]{Kochdumper2021b}, where we determine the orientation of the car from the direction of the velocity vector.

For verification, we consider the traffic scenario \textit{BEL\_Putte-4\_2\_T-1} from the CommonRoad database\footnote{available at \url{https://commonroad.in.tum.de/scenarios}}.
The unsafe sets $\unsafeSet{i}$ for the verification task are given by the road boundary and the occupancy space of other traffic participants.
Since the road boundary is non-convex, we use triangulation to represent it as the union of 460 convex polytopes.
The occupancy spaces of other traffic participants over time intervals of length $0.1\si{\second}$ are represented by polytopes, which results in 170 time-varying unsafe sets for the six vehicles in the scenario.
The safe set $\safeSet{i}$ is given by the constraint that the absolute acceleration should stay below $11.5\si{\metre \per \square \second}$, which we inner-approximate by a polytope with 20 halfspaces.
Even for this complex verification task, \algref{alg:verify} only requires $64$\si{\second} and two refinements of the error bound $\emax{}$ to prove that the reference trajectory is robustly safe (see \figref{fig:commonRoad}).

\begin{figure}[!tb]
  \centering
  \includegraphics[width=0.99\columnwidth]{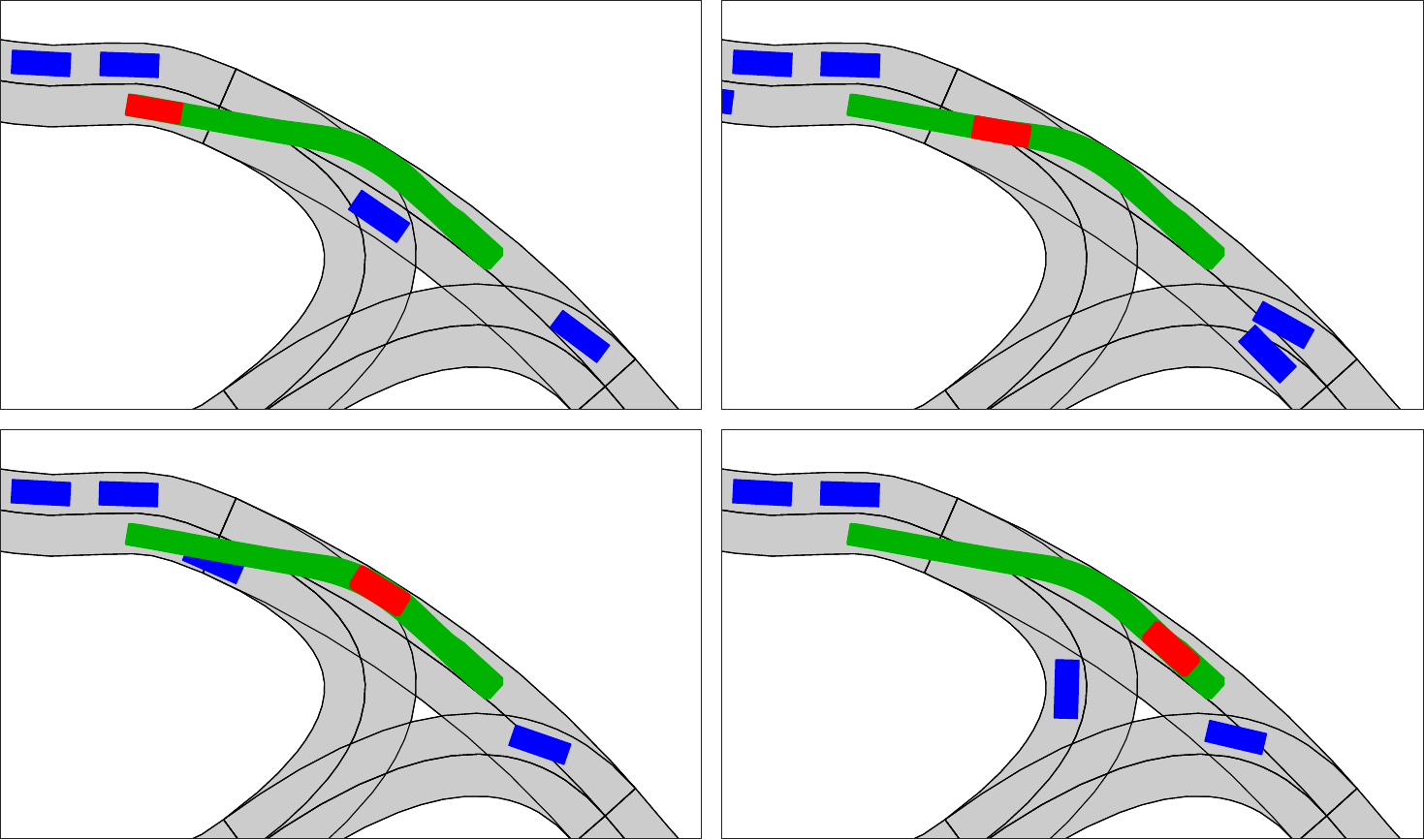}
  \caption{Traffic scenario at times 0\si{\second}, 1\si{\second}, 2\si{\second}, and 3\si{\second}, where the reachable set for the whole time horizon, the reachable set for the current time point, and the other traffic participants are shown.}
  \label{fig:commonRoad}
\end{figure}


\section{Discussion}
\label{sec:discussion}

Despite the convincing results of our automated verification algorithm in \secref{sec:numex}, there is still potential for improvement:
According to \tabref{tab:benchmarks}, other reachability tools solve high-dimensional benchmarks often faster than our approach since they apply tailored algorithms, such as block-decomposition \cite{Bogomolov2018} and Krylov subspace methods \cite{Althoff2019,Bak2019}.
Therefore, a natural next step is to extend our concept of automated parameter tuning via error analysis to these specialized algorithms to accelerate the verification of high-dimensional systems.
In addition, since many reachability algorithms for nonlinear systems \cite{Althoff2008c,Li2020} are based on reachability analysis for linear systems, another intriguing research direction is the extension to nonlinear systems.
Verification algorithms based on implicit set representations, such as support functions \cite{Wetzlinger2023}, also present an interesting comparison to our proposed method, which computes explicit sets.

Moreover, for systems where some states have no initial uncertainty and are not influenced by uncertain inputs, our proposed algorithm cannot falsify the system since the computed inner-approximation of the reachable set will always be empty.
An example is the autonomous car in \secref{sec:autonCar}, where the states corresponding to the reference trajectory are not subject to any uncertainty.
Fortunately, these cases are easy to detect and one can use classical safety falsification techniques, such as Monte Carlo methods \cite{Abbas2013}, Bayesian optimization \cite{Mathesen2021}, or cross-entropy techniques \cite{Sankaranarayanan2012} instead.
In general, combining safety falsification methods with our algorithm may accelerate the falsification process and provide the user with a concrete counterexample in the form of a falsifying trajectory.

Finally, while our algorithm already supports the general case of specifications defined by time-varying safe sets and unsafe sets, future work could include an extension to temporal logic specifications.
This may be realized by a conversion to reachset temporal logic \cite{Roehm2016b}, a particular type of temporal logic that can be evaluated on reachable sets directly.
Another possibility is to convert temporal logic specifications to an acceptance automaton \cite{Maler2006}, which can then be combined with the linear system via parallel composition \cite{Frehse2018}.

\section{Conclusion}
\label{sec:conclusion}

In this work, we propose a paradigm shift for reachability analysis of linear systems:
Instead of requiring the user to manually tune algorithm parameters such as the time step size, our approach automatically adapts all parameters such that the computed outer-approximation respects a desired maximum distance to the exact reachable set.
Building on this result, we then extract an inner-approximation of the reachable set directly from the outer-approximation using the Minkowski difference, which finally enables us to design a sound verification algorithm that automatically refines the inner- and outer-approximations until specifications given by time-varying safe and unsafe sets can either be verified or falsified.
An evaluation on benchmarks representing the current limits for state-of-the-art reachability tools demonstrates that our approach is competitive regarding the computation time, even for high-dimensional systems.
Overall, the autonomy of our approach enables non-experts to verify or falsify safety specifications for linear systems in reasonable time.


\appendices

\renewcommand{\thesectiondis}[2]{\Alph{section}:}
\section{Additional Propositions}
\label{app:addprops}

For the proof of \propref{prop:eps_affine} we require the error induced by outer-approximating the linear combination for zonotopes:
\begin{proposition} \label{prop:ecomb}
Given the homogeneous solution $\HPtp{t_k} = \zono{c_h, G_h}$ with $G_h \in \R{n \times \gens{}_h}$ and the particular solution $\Pu{\Delta t_{k}} = c_p$, the Hausdorff distance between the exact linear combination
\begin{equation*}
	\S{} = \linCombOp{\HPtp{t_k},\HPtp{t_{k+1}}}
\end{equation*}
with $\HPtp{t_{k+1}} = e^{A \Delta t_k} \HPtp{t_{k}} + \Pu{\Delta t_{k}}$ and the corresponding zonotope outer-approximation $\Sover{}$ computed using \eqref{eq:zonoLinComb} is bounded by
\begin{equation*}
	\dH{\S{},\Sover{}} \leq  \sqrt{\gens{}_h} ~ \norm{G_h^{(-)}}_2,
\end{equation*}
where $G_h^{(-)} = (e^{A\Delta t_k} - I_n) G_h$.
\end{proposition}
\begin{proof}
We insert the homogeneous and particular solutions into \eqref{eq:defLinComb} and shift the interval of $\lambda$ from $[0,1]$ to $[-1,1]$, which yields
\begin{align*}
	\S{} &= \bigg\{ \lambda \bigg(c_h + \sum_{i=1}^{\gens{}_h} G_{h(\cdot,i)} \alpha_i \bigg)
		+ (1-\lambda) \bigg( e^{A\Delta t_k} \\
	&\;\; \bigg(c_h + \sum_{i=1}^{\gens{}_h} G_{h(\cdot,i)} \alpha_i \bigg) + c_p \bigg)
		~ \bigg| ~ \alpha_i \in [-1,1], \lambda \in [0,1] \bigg\} , \\
	&= \bigg\{ 0.5 \Big( \big(I_n + e^{A\Delta t_k}\big)c_h + c_p \Big) \\
	&\quad\, + 0.5 \lambda \Big( \big(e^{A\Delta t_k} - I_n \big) c_h + c_p \Big) + 0.5 \sum_{i=1}^{\gens{}_h} G^{(+)}_{h(\cdot,i)} \alpha_i \\
	&\quad\, 
		+ 0.5 \sum_{i=1}^{\gens{}_h} G^{(-)}_{h(\cdot,i)} \alpha_i \lambda ~
		\bigg| ~ \alpha_i, \lambda \in [-1,1] \bigg\} ,
\end{align*}
with $G_h^{(+)} = (e^{A\Delta t_k} + I_n) G_h$ and $G_h^{(-)} = (e^{A\Delta t_k} - I_n) G_h$.
In order to represent this exact linear combination $\S{}$ as a zonotope, we have to substitute the bilinear factors $\alpha_i \lambda$ in the last term by additional linear factors $\omega_i \in [-1,1]$.
By neglecting the dependency between $\alpha$ and $\lambda$, we obtain the outer-approximation $\Sover{}$ of the exact linear combination $\S{}$.
Due to the obvious containment $\S{} \subseteq \Sover{}$, the formula for the Hausdorff distance in \eqref{eq:dH} simplifies to
\begin{equation*} 
	\dH{\S{},\Sover{}} = \max_{\widehat{s} \in \Sover{}} \min_{ \substack{\vspace{1pt} \\ s \in \S{} } } \norm{\widehat{s} - s}_2 .
\end{equation*}
Exploiting the identical factors before and after conversion, all terms but one cancel out and we obtain
\begin{align}
	&\max_{\widehat{s} \in \Sover{}} \min_{ \substack{\vspace{1pt} \\ s \in \S{}} } \norm{\widehat{s} - s}_2
	\leq \max_{\substack{\omega_i \in [-1,1] \\ \alpha_i \in [-1,1] \\ \lambda \in [-1,1]} } 0.5 \norm{ \sum_{i=1}^{\gens{}_h} G^{(-)}_{h(\cdot,i)} (\omega_i - \alpha_i \lambda) }_2 \nonumber \\
	&\qquad \qquad \leq 0.5 \norm{ G^{(-)}_h }_2
		\max_{\varphi \in [-\vecones{},\vecones{}]} \norm{ (\omega - \alpha \lambda) }_2
		\label{eq:dH_SSover}
\end{align}
with $\alpha = [\alpha_1~\dots~\alpha_{\gens{}_h}]^\top$, $\omega = [\omega_1~\dots~\omega_{\gens{}_h}]^\top$, and $\varphi = [\omega~\alpha~\lambda]^\top$.
According to the Bauer Maximum Principle, we may assume that the maximum is attained at a point which satisfies the constraints
\begin{equation*}
	\forall i \in \{1,...,\gens{}_h\}: \omega^2_i = 1, \alpha^2_i = 1, \quad \text{and} \quad \lambda^2 = 1
\end{equation*}
for the maximization term in \eqref{eq:dH_SSover}.
Consequently, we obtain
\begin{align*}
	&\max_{\varphi \in [-\vecones{},\vecones{}]} \norm{ \omega - \alpha \lambda }^2_2
	= \max_{\varphi \in [-\vecones{},\vecones{}]}
		\omega^\top \omega + \alpha^\top \alpha \,\lambda^2
		- 2 \lambda \,\omega^\top \alpha \\
	&\leq \max_{\varphi \in [-\vecones{},\vecones{}]} \omega^\top \omega
		+ \max_{\varphi \in [-\vecones{},\vecones{}]} \alpha^\top \alpha \,\lambda^2
		+ \max_{\varphi \in [-\vecones{},\vecones{}]} -2 \lambda \,\omega^\top \alpha \\
	&= \gens{}_h + \gens{}_h + 2 \gens{}_h = 4 \gens{}_h,
\end{align*}
which implies $\max_{\varphi \in [-\vecones{},\vecones{}]} \norm{ \omega - \alpha \lambda }_2 \leq 2 \sqrt{\gens{}_h}$.
We insert this result into \eqref{eq:dH_SSover} to obtain the error
\begin{equation*}
	\dH{\S{},\Sover{}}
	\leq 0.5 \norm{ G^{(-)}_h }_2 \, 2 \sqrt{\gens{}_h}
	= \sqrt{\gens{}_h} \norm{G^{(-)}_h}_2 ,
\end{equation*}
which concludes the proof.
\end{proof}

To derive the error $\eaccUstep{k}$ for one time step contained in the particular solution $e^{At_k} \overPU{\Delta t_k}$ as used in \propref{prop:eaccU}, we require the following proposition:

\begin{proposition} \label{prop:dH_MS}
For a compact set $\mathcal{S} \subset \R{n}$ and two matrices $M_1, M_2 \in \R{m \times n}$, we have
\begin{equation*} 
	\dH{(M_1 + M_2) \mathcal{S}, M_1 \mathcal{S}} \leq \dH{\matzeros{}, M_2 \mathcal{S}} .
\end{equation*}
\end{proposition}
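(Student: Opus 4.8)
The plan is to work directly from the definition of the Hausdorff distance in \eqref{eq:dH}, exploiting the fact that the two sets $(M_1+M_2)\mathcal{S}$ and $M_1\mathcal{S}$ are parametrized by the \emph{same} underlying points $s \in \mathcal{S}$. First I would rewrite the right-hand side as a uniform pointwise bound: since $\matzeros{}$ denotes the singleton $\{\matzeros{}\}$, the distance $\dH{\matzeros{}, M_2\mathcal{S}}$ collapses to $\sup_{s \in \mathcal{S}} \norm{M_2 s}_2$ (compactness guarantees this supremum is attained and finite), so that $\norm{M_2 s}_2 \leq \dH{\matzeros{}, M_2\mathcal{S}}$ holds for every $s \in \mathcal{S}$.

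With this observation in hand, both directions of the Hausdorff distance follow from a single pairing argument. For the first term of \eqref{eq:dH}, given any point $(M_1+M_2)s \in (M_1+M_2)\mathcal{S}$, I would select its partner $M_1 s \in M_1\mathcal{S}$ obtained from the identical parameter $s$; their difference is exactly $M_2 s$, whose norm is bounded by $\dH{\matzeros{}, M_2\mathcal{S}}$. Hence the inner infimum over $M_1\mathcal{S}$ is at most this bound for each $s$, and taking the supremum over $s$ preserves it. The second term is handled symmetrically, pairing each $M_1 s \in M_1\mathcal{S}$ with $(M_1+M_2)s \in (M_1+M_2)\mathcal{S}$. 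Taking the maximum of the two terms then yields the claimed inequality.

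The step I expect to require the most care is the insistence on pairing points through the shared parameter $s$, rather than reasoning via the Minkowski-sum containment $(M_1+M_2)\mathcal{S} \subseteq M_1\mathcal{S} \oplus M_2\mathcal{S}$, which would give a weaker and ultimately unhelpful estimate. Because both sets are images of the same compact set $\mathcal{S}$, the difference of paired points collapses to the single term $M_2 s$, and this is precisely what makes the clean bound possible; no separate limiting arguments are needed beyond the compactness already assumed in the statement.
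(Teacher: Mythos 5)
Your proposal is correct and follows essentially the same argument as the paper's proof: both bound each inner infimum in the Hausdorff distance by pairing $(M_1+M_2)s$ with $M_1 s$ for the same parameter $s$, reducing the difference to $M_2 s$ and identifying the right-hand side as $\max_{s \in \S{}} \norm{M_2 s}_2$. No substantive differences.
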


\begin{proof}
For the left-hand side, we choose $s_1 = s_2$ for both $\min$-operations in the definition \eqref{eq:dH} of the Hausdorff distance:
\begin{align*}
	&\dH{(M_1 + M_2) \mathcal{S}, M_1 \mathcal{S}} \\
	&\quad \overset{\eqref{eq:dH}}{=} \max \Big\{ \max_{s_1 \in \S{}}
			\big( \, \min_{s_2 \in \S{}} \, \norm{(M_1+M_2) s_1 - M_1 s_2}_2 \,
		\big), \\
	&\quad \qquad \qquad \max_{s_2 \in \S{}}
		\big( \, \min_{s_1 \in \S{}} \norm{(M_1+M_2) s_1 - M_1 s_2}_2 \,
		\big) \Big\} \\
	&\quad \leq \max \Big\{
		\max_{s_1 \in \S{}} \norm{(M_1+M_2) s_1 - M_1 s_1}_2, \\
	&\quad \qquad \qquad \max_{s_2 \in \S{}} \norm{(M_1+M_2) s_2 - M_1 s_2}_2
	\Big\} \\
	&\quad = \max_{s \in \S{}} \norm{M_2 s}_2 .
\end{align*}
The right-hand side evaluates trivially to
\begin{equation*}
	\dH{\matzeros{}, M_2 \S{}} = \max_{s \in \S{}} \norm{M_2 s}_2 ,
\end{equation*}
which combined with the result above yields the claim.
\end{proof}

Using \propref{prop:dH_MS}, we obtain the following error bound:

\begin{proposition} \label{prop:errUoneStep}
The Hausdorff distance between the propagated exact particular solution
\begin{equation*}
	e^{A t_k} \PU{\Delta t_k} = \bigg\{ e^{A t_k} \int_0^{\Delta t_k} e^{A(\Delta t_k-\theta)} u(\theta) \, \mathrm{d}\theta ~ \bigg| ~ u(\theta) \in \inputsetzero{} \bigg\}
\end{equation*}
and the outer-approximation $e^{At_k} \overPU{\Delta t_k}$ with $\overPU{\Delta t_k}$ from \eqref{eq:overPU_init} is bounded by \eqref{eq:eaccUstep}.
\end{proposition}
\begin{proof}
We obtain a tight bound for the error by computing the distance between an inner-approximation $\underPU{\Delta t_k}$ and the outer-approximation $\overPU{\Delta t_k}$ in \eqref{eq:overPU_init}.
By considering uncertain but constant inputs we compute an inner-approximation $\underPU{\Delta t_k} \subseteq \PU{\Delta t_k}$ as
\begin{align*}
	\underPU{\Delta t_k} &= \int_{0}^{\Delta t_k} e^{A(\Delta t_k - \theta)} \mathrm{d}\theta \, \inputsetzero{} \\
	&= \bigg( \sum_{i=0}^{\infty} \frac{A^{i} \Delta t_k^{i+1}}{(i+1)!} \bigg) \, \inputsetzero{}
	= \bigg( \Delta t_k \, I_n + \sum_{i=1}^{\infty} \tilde{A}_i \bigg) \, \inputsetzero{}
\end{align*}
where $\tilde{A}_i = \frac{A^i \Delta t_k^{i+1}}{(i+1)!}$.
Applying \propref{prop:dH_MS} with $M_1 = \Delta t_k I_n$ and $M_2 = \sum_{i=1}^{\infty} \tilde{A}_i$, we have
\begin{align*}
	&\dH{\underPU{\Delta t_k},\Delta t_k \, \inputsetzero{}}
	\leq d_H \bigg( \matzeros{}, \bigg( \sum_{i=1}^{\infty} \tilde{A}_i \bigg) \, \inputsetzero{} \bigg) \\
	&\qquad \overset{\eqref{eq:dH<=err}}{\leq}
		\errOp{ \bigg( \sum_{i=1}^{\infty} \tilde{A}_i \bigg) \, \inputsetzero{} } \\
	&\qquad \overset{\eqref{eq:E}}{\leq} \errOp{ \bigg( \sum_{i=1}^{\tayl{}} \tilde{A}_i \bigg) \, \inputsetzero{}
		\oplus \E{k} \Delta t_k \, \inputsetzero{}} .
\end{align*}
From \eqref{eq:overPU_init}, we have the trivial containment $\Delta t_k \, \inputsetzero{} \subset \overPU{\Delta t_k}$ and thus
%
\begin{align*}
	&\dH{\Delta t_k \, \inputset{},\overPU{\Delta t_k}} \\
	&\quad \leq \errOp{ \bigoplus_{i=1}^{\tayl{}} \bigg( \tilde{A}_i \, \inputsetzero{} \bigg) 
		\oplus \E{k} \Delta t_k \, \inputsetzero{}} .
\end{align*}
We use the triangle inequality to combine the last two results:
\begin{align*}
	&\dH{\PU{\Delta t_k},\overPU{\Delta t_k}}
	\leq \dH{\underPU{\Delta t_k},\overPU{\Delta t_k}} \nonumber \\
	&\quad \leq \dH{\underPU{\Delta t_k},\Delta t_k \, \inputsetzero{}}
		+ \dH{\Delta t_k \, \inputsetzero{},\overPU{\Delta t_k}} \nonumber \\
	&\quad \leq \errOp{ \bigg( \sum_{i=1}^{\tayl{}} \tilde{A}_i \bigg) \inputsetzero{}
		\oplus \E{k} \Delta t_k \, \inputsetzero{} } \\
	&\qquad + \errOp{ \bigoplus_{i=1}^{\tayl{}} \bigg( \tilde{A}_i \, \inputsetzero{} \bigg)
		\oplus \E{k} \Delta t_k \, \inputsetzero{} } .
\end{align*}
Including the mapping by $e^{A t_k}$ then yields the final result.
\end{proof}

\renewcommand{\thesectiondis}[2]{\Alph{section}:}
\section{Additional Lemmata}
\label{app:lemmas}

For the proof of \thmref{thm:automated}, we require results about the limit behavior of the error terms, which we derive here.
First, we examine the remainder of the exponential matrix:
\begin{lemma} \label{lmm:E_lim}
	The remainder of the exponential matrix $\E{k}$ in \eqref{eq:E} satisfies
	\begin{equation*}
		\lim_{\Delta t_k \to 0} \E{k} = \matzeros{}
		\quad \text{and} \quad
		\E{k} \sim \bigO{ \Delta t_k^{\tayl{k}+1} } .
	\end{equation*}
\end{lemma}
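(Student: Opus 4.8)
The plan is to recognize that the upper bound $E(\Delta t_k,\tayl{k})$ appearing in the interval matrix $\E{k}$ is nothing but the Taylor remainder of the entry-wise nonnegative matrix exponential $e^{|A|\Delta t_k}$ truncated after order $\tayl{k}$. From the definition \eqref{eq:E},
\[
	E(\Delta t_k,\tayl{k}) = e^{|A|\Delta t_k} - \sum_{i=0}^{\tayl{k}} \frac{(|A|\Delta t_k)^i}{i!} = \sum_{i=\tayl{k}+1}^{\infty} \frac{|A|^i}{i!}\,\Delta t_k^i ,
\]
where the last equality uses the convergent power-series representation of the matrix exponential. Both claimed properties then follow from standard estimates on this remainder form, applied entry-wise since $|A|$ has nonnegative entries.

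First I would factor out the leading power of the time step size. Reindexing with $j = i - (\tayl{k}+1)$ gives
\[
	E(\Delta t_k,\tayl{k}) = \Delta t_k^{\tayl{k}+1} \sum_{j=0}^{\infty} \frac{|A|^{\,j+\tayl{k}+1}}{(j+\tayl{k}+1)!}\,\Delta t_k^{j} .
\]
Next I would argue that the residual series $R(\Delta t_k) := \sum_{j=0}^{\infty} \frac{|A|^{\,j+\tayl{k}+1}}{(j+\tayl{k}+1)!}\,\Delta t_k^{j}$ converges entry-wise for every $\Delta t_k$ and stays bounded as $\Delta t_k \to 0$, since entry-wise it is dominated by $|A|^{\tayl{k}+1} e^{|A|\Delta t_k}$ (using $(j+\tayl{k}+1)! \geq j!$), and its limit as $\Delta t_k \to 0$ is the constant matrix $\frac{|A|^{\tayl{k}+1}}{(\tayl{k}+1)!}$. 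Hence $E(\Delta t_k,\tayl{k}) = \Delta t_k^{\tayl{k}+1}\,R(\Delta t_k)$ is the product of $\Delta t_k^{\tayl{k}+1}$ with a bounded matrix factor, which immediately yields both $E(\Delta t_k,\tayl{k}) \to \matzeros{}$ and $E(\Delta t_k,\tayl{k}) \sim \bigO{\Delta t_k^{\tayl{k}+1}}$.

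Finally I would lift these entry-wise bounds to the interval matrix: since $\E{k} = [-E(\Delta t_k,\tayl{k}),\,E(\Delta t_k,\tayl{k})]$ and both endpoints converge to the zero matrix at rate $\Delta t_k^{\tayl{k}+1}$, the interval matrix collapses to $[\matzeros{},\matzeros{}] = \matzeros{}$ with the same order, establishing the lemma. The only point requiring care—rather than a genuine obstacle—is that the truncation order $\tayl{k}$ is itself tuned as a function of $\Delta t_k$ in \algref{alg:automated}. I would resolve this by noting that the estimate above holds for each fixed $\tayl{k}$, and that shrinking $\Delta t_k$ can only increase the order chosen by the convergence criterion, which raises the exponent $\tayl{k}+1$ and therefore never degrades the claimed rate.
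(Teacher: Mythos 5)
Your proposal is correct and follows essentially the same route as the paper: both identify $E(\Delta t_k,\tayl{k})$ as the Taylor tail $\sum_{i=\tayl{k}+1}^{\infty}(|A|\Delta t_k)^i/i!$ and read off the limit and the $\bigO{\Delta t_k^{\tayl{k}+1}}$ rate from it. You merely spell out the step the paper leaves implicit—factoring out $\Delta t_k^{\tayl{k}+1}$ and bounding the residual series—and add a (sound) remark about the tuned truncation order that the paper does not discuss.
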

\begin{proof}
For the limit value we obtain from \eqref{eq:E}
\begin{align*}
	&\lim_{\Delta t_k \to 0} E(\Delta t_k, \tayl{k}) = e^{|A|0} - \sum_{i=0}^{\tayl{k}} \frac{1}{i!} \big( |A|0 \big)^i = \matzeros{} , \\
	&\lim_{\Delta t_k \to 0} \E{k} = \lim_{\Delta t_k \to 0} [-E(\Delta t_k,\tayl{k}),E(\Delta t_k,\tayl{k})] = \matzeros{} ,
\end{align*}
and the asymptotic behavior follows from
\begin{align*}
	E(\Delta t_k, \tayl{k})
	= e^{|A|\Delta t_k} - \sum_{i=0}^{\tayl{k}} \frac{\big( |A|\Delta t_k \big)^i}{i!}
	= \sum_{i=\tayl{k}+1}^{\infty} \frac{\big( |A|\Delta t_k \big)^i}{i!} ,
\end{align*}
which yields $\E{k} \sim \bigO{ \Delta t_k^{\tayl{k}+1} }$.
\end{proof}

Next, we consider the error of the particular solution:
\begin{lemma} \label{lmm:eaccUstep_lim}
	The error $\eaccUstep{k}$ in \eqref{eq:eaccUstep} satisfies
	\begin{equation*}
		\lim_{\Delta t_k \to 0} \eaccUstep{k} = 0
		\quad \text{and} \quad
		\eaccUstep{k} \sim \bigO{ \Delta t_k^2 }.
	\end{equation*}
\end{lemma}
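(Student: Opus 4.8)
The plan is to reduce the claim to an order-of-magnitude analysis of the two $\errOp{\cdot}$ summands in \eqref{eq:eaccUstep}, using only three elementary properties of the operator $\errOp{\S{}} = \rad(\boxOp{\S{}})$ that are inherited from $\boxOp{\cdot}$ and $\rad(\cdot)$: monotonicity, positive homogeneity $\errOp{\lambda \S{}} = |\lambda|\,\errOp{\S{}}$ for a scalar $\lambda$, and subadditivity $\errOp{\S{1} \oplus \S{2}} \leq \errOp{\S{1}} + \errOp{\S{2}}$ under the Minkowski sum (the latter following from $\boxOp{\S{1}\oplus\S{2}} = \boxOp{\S{1}}\oplus\boxOp{\S{2}}$ together with the triangle inequality for $\rad(\cdot)$). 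I would also use that a linear map distributes over $\oplus$, so that the factor $e^{At_k}$ can be pushed onto each individual summand inside the two arguments.

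First I would record the decay rates of the building blocks. Each coefficient matrix carries an explicit scalar factor, $\tilde{A}_i = \frac{\Delta t_k^{\,i+1}}{(i+1)!}\,A^i$, so by positive homogeneity $\errOp{e^{At_k}\tilde{A}_i\,\inputsetzero{}} = \frac{\Delta t_k^{\,i+1}}{(i+1)!}\,\errOp{e^{At_k}A^i\,\inputsetzero{}}$, i.e. a constant (independent of $\Delta t_k$, since $\inputsetzero{}$ and $e^{At_k}$ are fixed) times $\Delta t_k^{\,i+1}$; hence $\errOp{e^{At_k}\tilde{A}_i\,\inputsetzero{}} \sim \bigO{\Delta t_k^{\,i+1}}$, with the slowest-decaying contribution coming from $i = 1$, namely $\tilde{A}_1 = \tfrac{1}{2}A\,\Delta t_k^2$ giving $\bigO{\Delta t_k^2}$. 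For the remainder term, \lmmref{lmm:E_lim} yields $\E{k} \sim \bigO{\Delta t_k^{\,\tayl{k}+1}}$, so $e^{At_k}\E{k}\Delta t_k\,\inputsetzero{} \sim \bigO{\Delta t_k^{\,\tayl{k}+2}}$; since the truncation-order loop in \algref{alg:automated} guarantees $\tayl{k} \geq 1$, this is of order at least $\Delta t_k^3$ and therefore asymptotically dominated by the $\tilde{A}_1$ term.

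Combining these via subadditivity (and monotonicity, to absorb the matrix-sum form $(\sum_i \tilde{A}_i)\,\inputsetzero{}$ in the first argument into the Minkowski-sum form, using that $\inputsetzero{}$ is centred at the origin) gives $\eaccUstep{k} \leq c_1 \Delta t_k^2 + c_2 \Delta t_k^3 + \dots$ for constants depending only on $A$, $t_k$, and $\inputsetzero{}$, which establishes $\eaccUstep{k} \sim \bigO{\Delta t_k^2}$. The limit $\lim_{\Delta t_k \to 0}\eaccUstep{k} = 0$ is then immediate since every exponent of $\Delta t_k$ is strictly positive; alternatively it follows from $\lim_{\Delta t_k \to 0}\tilde{A}_i = \matzeros{}$ and $\lim_{\Delta t_k \to 0}\E{k} = \matzeros{}$, which collapse both argument sets to $\{\matzeros{}\}$ and drive $\errOp{\cdot}$ to zero by continuity of $\boxOp{\cdot}$ and $\rad(\cdot)$. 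I expect the main obstacle to be the rigorous handling of $\errOp{\cdot}$ — in particular verifying its subadditivity and homogeneity and controlling the over-approximative interval-matrix product $\E{k}\Delta t_k\,\inputsetzero{}$, whose box enclosure must be shown to shrink at the entrywise rate of $\E{k}\Delta t_k$ — rather than the order bookkeeping itself, which is routine once \lmmref{lmm:E_lim} is in hand.
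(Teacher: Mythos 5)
Your proposal is correct and takes essentially the same route as the paper's proof: both reduce the claim to the observations that $\tilde{A}_i \sim \bigO{\Delta t_k^{i+1}}$ with the dominant term at the minimal index $i=1$, and that $\E{k}\Delta t_k \sim \bigO{\Delta t_k^{\tayl{k}+2}}$ by \lmmref{lmm:E_lim}, then transfer these rates through $\errOp{\cdot}$. The only difference is that you spell out the monotonicity, homogeneity, and subadditivity of $\errOp{\cdot}$ that the paper leaves implicit as ``straightforward.''
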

\begin{proof}
Using the definition of the error $\eaccUstep{k}$ according to \eqref{eq:eaccUstep}
with $\tilde{A}_i = \frac{A^i \Delta t_k^{i+1}}{(i+1)!}$ and \lmmref{lmm:E_lim}, we have
\begin{alignat*}{2}
	&\lim_{\Delta t_k \to 0} \tilde{A}_i = 0 , \quad		&& \tilde{A}_i \sim \bigO{ \Delta t_k^2 }, \\
	&\lim_{\Delta t_k \to 0} \E{k}\Delta t_k = 0 , \quad	&& \E{k}\Delta t_k \sim \bigO{ \Delta t_k^{\tayl{k}+2} },
\end{alignat*}
where we exploit that the minimal index is $i = 1$.
It is then straightforward to obtain the limit and asymptotic behavior for $\eaccUstep{k}$.
\end{proof}

While we require a faster than linear decrease in $\Delta t_k$ for the accumulating error, a linear decrease is sufficient for the non-accumulating error as it only affects a single time step:
\begin{lemma} \label{lmm:ehom_lim}
	The error $\ehom{k}$ in \eqref{eq:ehom} satisfies
	\begin{equation*}
		\lim_{\Delta t_k \to 0} \ehom{k} = 0 \quad \text{and} \quad \ehom{k} \sim \bigO{ \Delta t_k } .
	\end{equation*}
\end{lemma}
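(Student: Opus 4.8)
The goal is to establish both the limit and the asymptotic rate for the error $\ehom{k}$ defined in \eqref{eq:ehom} as $\ehom{k} := 2\errOp{\C{}} + \sqrt{\gens{}_h}\,\norm{G_h^{(-)}}_2$. The plan is to analyze the two summands separately and show each vanishes at least linearly in $\Delta t_k$, so that their sum inherits the slower of the two rates, which we claim is $\bigO{\Delta t_k}$.

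First I would handle the second term $\sqrt{\gens{}_h}\,\norm{G_h^{(-)}}_2$, where $G_h^{(-)} = (e^{A\Delta t_k} - I_n)G_h$. The factor $\sqrt{\gens{}_h}$ is a constant independent of $\Delta t_k$, and $G_h$ is fixed, so the behavior is governed entirely by the matrix $e^{A\Delta t_k} - I_n$. Expanding the matrix exponential as a power series gives $e^{A\Delta t_k} - I_n = A\Delta t_k + \tfrac{1}{2}A^2\Delta t_k^2 + \dots$, whose norm is $\bigO{\Delta t_k}$ and which tends to $\matzeros{}$ as $\Delta t_k \to 0$. Hence this summand is $\bigO{\Delta t_k}$ with limit $0$.

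Next I would handle the first term $2\errOp{\C{}}$, recalling from \lineref{alg:standard:C} that $\C{} = \Fx{k}\overHPtp{t_k} \oplus \Fu{k}\uTrans{}$ with the interval matrices $\Fx{k}$ and $\Fu{k}$ given in \eqref{eq:Fx}-\eqref{eq:Fu}. Inspecting those definitions, every constituent is controlled by the interval coefficients $\mathcal{I}_i(\Delta t_k)$ from \eqref{eq:I}, which scale like $\Delta t_k^i$ for $i \geq 2$, together with the remainder terms $\E{k}$ and $\E{k}\Delta t_k$. By \lmmref{lmm:E_lim} we already know $\E{k} \to \matzeros{}$ with $\E{k} \sim \bigO{\Delta t_k^{\tayl{k}+1}}$. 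Since the lowest-order contribution to $\Fx{k}$ comes from $i=2$ (giving $\bigO{\Delta t_k^2}$) and to $\Fu{k}$ from $i=2$ as well (the term $\mathcal{I}_2(\Delta t_k)A/2! \sim \bigO{\Delta t_k^2}$), both interval matrices shrink at least quadratically. The operator $\errOp{\cdot} = \rad(\boxOp{\cdot})$ is monotone and linear under scaling, so $\errOp{\C{}}$ tends to $0$ as $\Delta t_k \to 0$; in fact this term is $\bigO{\Delta t_k^2}$, which is even faster than linear.

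The main obstacle, though routine, is verifying carefully that $\boxOp{\C{}}$ and hence $\errOp{\C{}}$ inherit the asymptotic scaling of the interval matrices without the set $\overHPtp{t_k}$ or the constant $\uTrans{}$ spoiling the estimate; since $\overHPtp{t_k}$ is a bounded zonotope whose size does not grow as $\Delta t_k \to 0$ within a single step and $\uTrans{}$ is constant, the multiplication by interval matrices of order $\bigO{\Delta t_k^2}$ produces a curvature enclosure of the same order, so $\errOp{\C{}} \sim \bigO{\Delta t_k^2}$. Combining the two summands, the dominant rate is that of the second term, yielding $\lim_{\Delta t_k \to 0}\ehom{k} = 0$ and $\ehom{k} \sim \bigO{\Delta t_k}$, as claimed. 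This linear rate is exactly what the convergence argument needs for the non-accumulating error, since it only affects a single step and therefore does not require the faster-than-linear decay demanded of the accumulating error.
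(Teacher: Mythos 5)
Your proposal is correct and follows essentially the same route as the paper's proof: it splits $\ehom{k}$ into the same two summands, bounds $\sqrt{\gens{}_h}\,\norm{G_h^{(-)}}_2$ via $(e^{A\Delta t_k}-I_n) \sim \bigO{\Delta t_k}$, and shows $2\errOp{\C{}} \sim \bigO{\Delta t_k^2}$ from the minimal index $i=2$ in $\mathcal{I}_i(\Delta t_k)$ together with \lmmref{lmm:E_lim}. The only cosmetic difference is that you explicitly note the sets $\overHPtp{t_k}$ and $\uTrans{}$ do not depend on $\Delta t_k$, which the paper leaves implicit.
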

\begin{proof}
We examine the two individual terms of $\ehom{k} = 2 \errOp{\C{}} + \sqrt{\gens{}_h} \, \norm{G_h^{(-)}}_2 $ separately:
\begin{align*}
	&\lim_{\Delta t_k \to 0} \sqrt{\gens{}_h} \, \norm{G_h^{(-)}}_2
	= \lim_{\Delta t_k \to 0} \sqrt{\gens{}_h} \, \norm{(e^{A\Delta t_k} - I_n) G_h}_2 \\
	&\qquad = \sqrt{\gens{}_h} \, \norm{(e^{\matzeros{}} - I_n) G_h}_2
	= \sqrt{\gens{}_h} \, \norm{\matzeros{}}_2 = 0 .
\end{align*}
To analyze the asymptotic behavior, it suffices to look at $(e^{A\Delta t_k} - I_n)G_h$ as the matrix $G_h$ and the factor $\gens{}_h$ do not depend on the time step size:
Since $(e^{A\Delta t_k} - I_n) \sim \bigO{\Delta t_k}$, we consequently obtain
\begin{equation*}
	\sqrt{\gens{}_h} \, \norm{G_h^{(-)}}_2 \sim \bigO{ \Delta t_k } .
\end{equation*}
For the limit behavior of the term $2 \errOp{\C{}}$, we have
\begin{align*}
	&\lim_{\Delta t_k \to 0} \mathcal{I}_i(\Delta t_k) \overset{\eqref{eq:I}}{=} \mathcal{I}_i(0) = \big[\big( i^{\frac{-i}{i-1}} - i^{\frac{-1}{i-1}} \big) 0^i,0\big] = 0 , \\
	&\lim_{\Delta t_k \to 0} \Fx{k} \overset{\eqref{eq:Fx}}{=} \bigoplus_{i=2}^{\tayl{k}} \mathcal{I}_i(0) \, \frac{A^i}{i!} \oplus \Ezero{k} = \matzeros{} , \\
	&\lim_{\Delta t_k \to 0} \Fu{k} \overset{\eqref{eq:Fu}}{=} \bigoplus_{i=2}^{\tayl{k}+1} \mathcal{I}_i(0) \, \frac{A^i}{i!} \oplus \Ezero{k} 0 = \matzeros{} ,
\end{align*}
which entails
\begin{equation*}
	\lim_{\Delta t_k \to 0} 2 \underbrace{( \errOp{ \Fxzero{} \HPtp{t_k} } + \errOp{ \Fuzero{} \uTrans{} } )}_{\auxerr(\C{})} = 0 .
\end{equation*}
Moreover, we have $\mathcal{I}_i(\Delta t_k) \sim \bigO{ \Delta t_k^2 }$ as the minimal index is $i=2$, so that we obtain in combination with \lmmref{lmm:E_lim}
\begin{equation*}
	2 ( \errOp{ \Fxzero{} \HPtp{t_k} } + \errOp{ \Fuzero{} \uTrans{} } ) \sim \bigO{ \Delta t_k^2 } .
\end{equation*}
It is then straightforward to obtain the limit and asymptotic behavior for $\ehom{k}$.
\end{proof}

\begin{lemma} \label{lmm:eUstep_lim}
	The error $\eUstep{k}$ in \eqref{eq:eUstep} satisfies
	\begin{equation*}
		\lim_{\Delta t_k \to 0} \eUstep{k} = 0
		\hspace{4pt} \text{and} \hspace{4pt}
		\eUstep{k} \sim \bigO{ \Delta t_k }.
	\end{equation*}
\end{lemma}
\begin{proof}
For the limit, we insert $\Delta t_k = 0$ into \eqref{eq:overPU_init} to obtain
\begin{align*}
	&\lim_{\Delta t_k \to 0} \eUstep{k} = \errOp{e^{At_k} \overPU{0}} \\
	&\quad = \errOp{e^{At_k} \bigg( \bigoplus_{i=0}^{\tayl{k}} \frac{A^i 0^{i+1}}{(i+1)!} \, \inputsetzero{}
		\oplus \Ezero{k} 0 \, \inputsetzero{} \bigg)} = 0 .
\end{align*}
According to \lmmref{lmm:E_lim}, we have $\E{k} \sim \bigO{\Delta t_k}$, and with the first term of the sum above, we obtain $\eUstep{k} \sim \bigO{\Delta t_k}$.
\end{proof}

%
%


\bibliographystyle{IEEEtran}
\bibliography{subtex/root}


\begin{IEEEbiography}[{\includegraphics[width=1in,height=1.25in,clip,keepaspectratio]{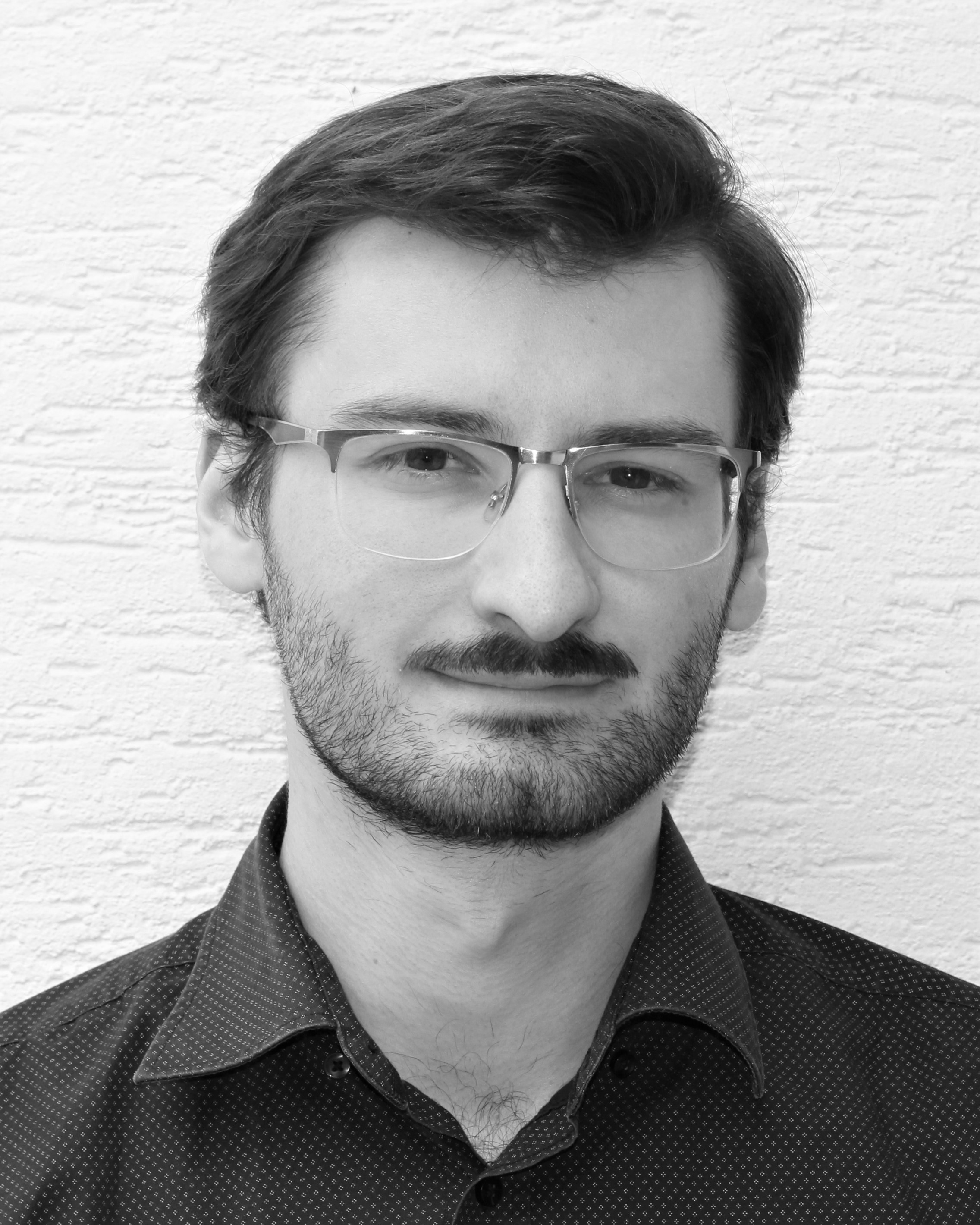}}]{Mark Wetzlinger} received the B.S. degree in Engineering Sciences in 2017 jointly from Universit\"{a}t Salzburg, Austria and Technische Universit\"{a}t M\"{u}nchen, Germany, and the M.S. degree in Robotics, Cognition and Intelligence in 2019 from Technische Universit\"{a}t M\"{u}nchen, Germany. He is currently pursuing the Ph.D. degree in computer science at Technische Universit\"{a}t M\"{u}nchen, Germany. His research interests include formal verification of linear and nonlinear continuous systems, reachability analysis, adaptive parameter tuning, and model order reduction.
\end{IEEEbiography}
\begin{IEEEbiography}[{\includegraphics[width=1in,height=1.25in,clip,keepaspectratio]{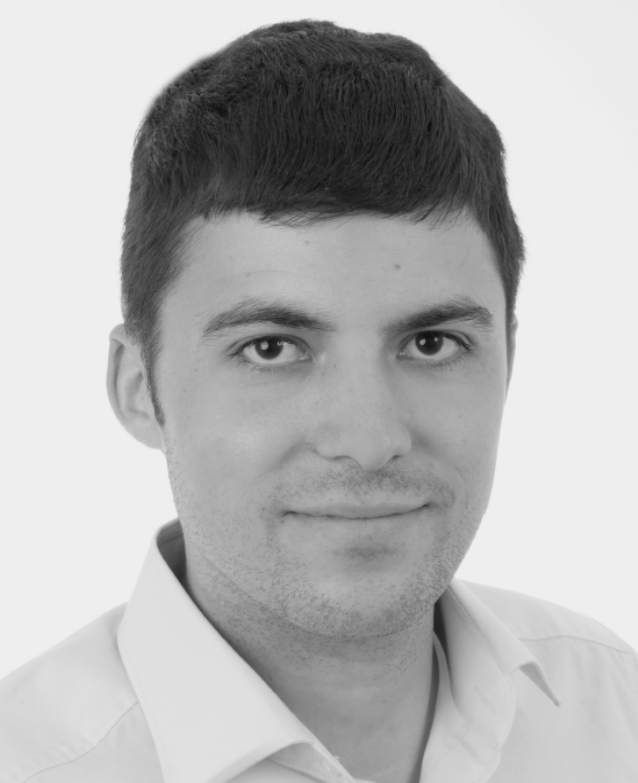}}]{Niklas Kochdumper} received the B.S. degree in Mechanical Engineering in 2015, the M.S. degree in Robotics, Cognition and Intelligence
in 2017, and the Ph.D. degree in computer science in 2022, all from Technische Universit\"{a}t M\"{u}nchen, Germany. He is currently a postdoctoral researcher at Stony Brook University, USA. His research interests include formal verification of continuous and hybrid systems, reachability analysis, computational geometry, controller synthesis, and neural network verification.
\end{IEEEbiography}
\begin{IEEEbiography}[{\includegraphics[width=1in,height=1.25in,clip,keepaspectratio]{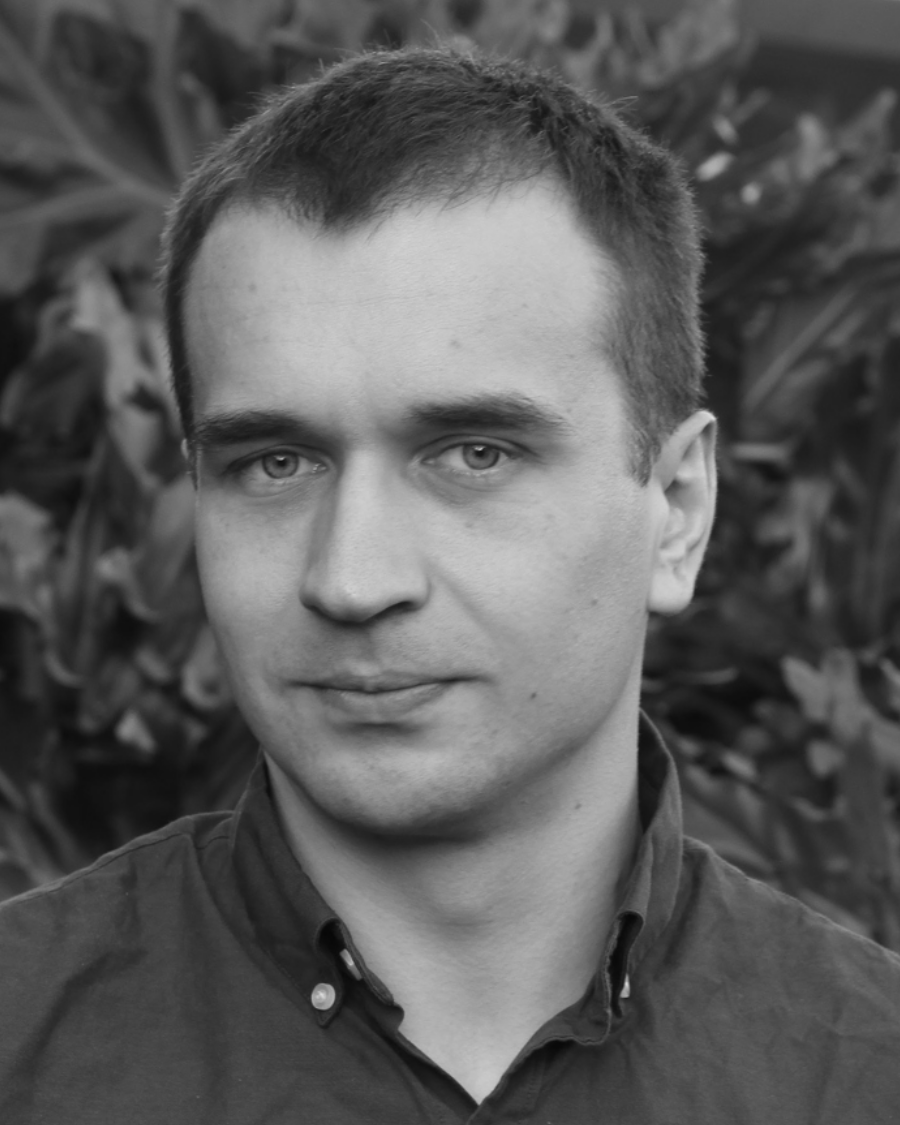}}]{Stanley Bak} is an assistant professor in computer science at Stony Brook University in Stony Brook, NY, USA. He received the B.S. degree in computer science from Rensselaer Polytechnic Institute in 2007, and the M.S. degree and Ph.D. degree both in computer science from the University of Illinois at Urbana-Champaign in 2009 and 2013. His research interests include verification and testing methods for cyber-physical systems and neural networks.
\end{IEEEbiography}
\begin{IEEEbiography}[{\includegraphics[width=1in,height=1.25in,clip,keepaspectratio]{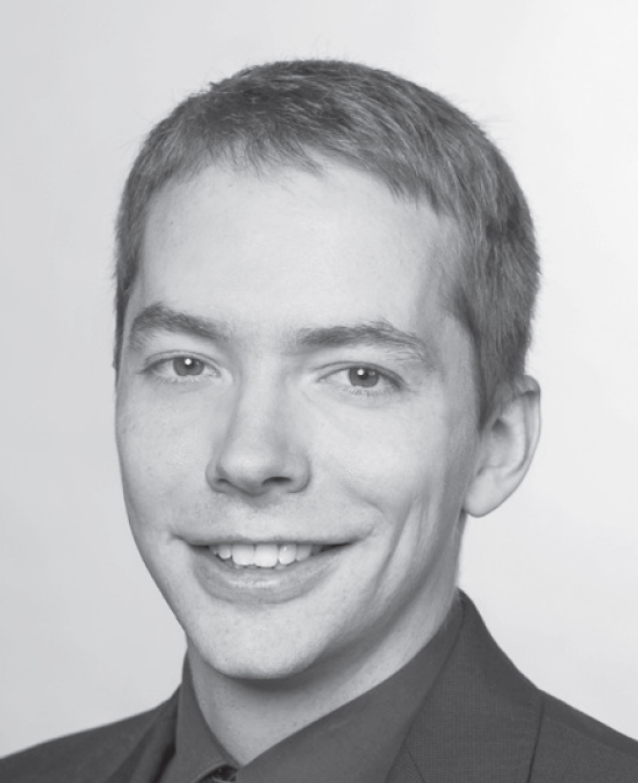}}]{Matthias Althoff} is an associate professor in computer science at Technische Universit\"{a}t M\"{u}nchen, Germany. He received his diploma engineering degree in Mechanical Engineering in 2005, and his Ph.D. degree in Electrical Engineering in 2010, both from Technische Universit\"{a}t M\"{u}nchen, Germany. From 2010 to 2012 he was a postdoctoral researcher at Carnegie Mellon University, Pittsburgh, USA, and from 2012 to 2013 an assistant professor at Technische Universit\"{a}t Ilmenau, Germany. His research interests include formal verification of continuous and hybrid systems, reachability analysis, planning algorithms, nonlinear control, automated vehicles, and power systems.
\end{IEEEbiography}

\end{document}